\newtheorem{lemma}{Lemma}
\newtheorem{corollary}{Corollary}
\newtheorem{proposition}{Proposition}
\newcommand{\Sec}[1]		{Sec.\,\ref{#1}}
\newcommand{\Fig}[1]		{Fig.\,\ref{#1}}
\newcommand{\Eq}[1]			{Eq.\,\ref{#1}}
\newcommand{\ie}   			{i.e.~}
\newcommand{\eg}   			{e.g.~}
\newcommand{\Erdos}   	{Erd\"os-R\'enyi }
\newcommand{\one}       {\mathds{1}}
\newcommand{\Exp}[1]    {\mathbb{E}[#1]}
\newcommand{\Prob}[1]   {\mathbb{P}(#1)}
\newcommand{\HazMat}   {\mathcal{H}}
\newcommand{\AdjMat}   {\mathcal{A}}
\title{Tight Bounds for Influence in Diffusion Networks and Application to Bond Percolation and Epidemiology}
\newcommand{\myaffiliation}{CMLA {--} ENS Cachan, CNRS, France}
\author{
R\'emi Lemonnier$^{1,2}$
\hspace{2em}
Kevin Scaman$^1$
\hspace{2em}
Nicolas Vayatis$^1$
\\$^1$ \myaffiliation
\\$^2$ 1000Mercis, Paris, France
\\\texttt{remi.lemonnier@ens-cachan.fr}
\\\texttt{\{scaman, vayatis\}@cmla.ens-cachan.fr}
}
\begin{document}
%

\maketitle

\begin{abstract}
In this paper, we derive theoretical bounds for the long-term influence of a node in an Independent Cascade Model (ICM). We relate these bounds to the spectral radius of a particular matrix and show that the behavior is sub-critical when this spectral radius is lower than $1$. More specifically, we point out that, in general networks, the sub-critical regime behaves in $O(\sqrt{n})$ where $n$ is the size of the network, and that this upper bound is met for star-shaped networks. 
We apply our results to epidemiology and percolation on arbitrary networks, and derive a bound for the critical value beyond which a giant connected component arises. Finally, we show empirically the tightness of our bounds for a large family of networks.
\end{abstract}

\section{Introduction}

The emergence of social graphs of the World Wide Web has had a considerable effect on propagation of ideas or information. For advertisers, these new diffusion networks have become a favored vector for \emph{viral marketing} operations, that consist of advertisements that people are likely to share by themselves with their social circle, thus creating a propagation dynamics somewhat similar to the spreading of a virus in epidemiology (\cite{kirby2006connected}). Of particular interest is the problem of \emph{influence maximization}, which consists of selecting the top-k nodes of the network to infect at time $t=0$ in order to maximize in expectation the final number of infected nodes at the end of the epidemic. This problem was first formulated by Domingues and Richardson in \cite{domingos2001mining} and later expressed in \cite{Kempe:2003:MSI:956750.956769} as an NP-hard discrete optimization problem under the Independent Cascade (IC) framework, a widely-used probabilistic model for information propagation.

From an algorithmic point of view, influence maximization has been fairly well studied. Assuming the transmission probability of all edges are known, Kempe, Kleinberg and Tardos (\cite{Kempe:2003:MSI:956750.956769}) derived a greedy algorithm based on Monte-Carlo simulations that was shown to approximate the optimal solution up to a factor $1-\frac{1}{e}$, building on classical results of optimization theory. Since then, various techniques were proposed in order to significantly improve the scalability of this algorithm (\cite{chen2009efficient,chen2010scalable,goyal2011celf++,ohara2013predictive}), and also to provide an estimate of the transmission probabilities from real data (\cite{gomez2010inferring,myers2010convexity}). Recently, a series of papers (\cite{DBLP:conf/icml/Gomez-RodriguezBS11, rodriguez2012influence, DBLP:conf/nips/DuSGZ13}) introduced \emph{continuous-time} diffusion networks in which infection spreads during a time period $T$ at varying rates across the different edges. While these models provide a more accurate representation of real-world networks for finite $T$, they are equivalent to the IC model when $T \rightarrow \infty$. In this paper, will focus on such long-term behavior of the contagion.

From a theoretical point of view, little is known about the influence maximization problem under the IC model framework. The most celebrated result established by Newman (\cite{newman2002spread}) proves the equivalence between bond percolation and the \emph{Susceptible-Infected-Removed} (SIR) model in epidemiology (\cite{kermack1932contributions}) that can be identified to a special case of IC model where transmission probability are equal amongst all infectious edges.

In this paper, we propose new bounds on the influence of any set of nodes. Moreover, we prove the existence of an \emph{epidemic threshold} for a key quantity defined by the spectral radius of a given \emph{hazard matrix}. Under this threshold, the influence of \emph{any} given set of nodes in a network of size $n$ will be $O(\sqrt{n})$, while the influence of a randomly chosen set of nodes will be $O(1)$. We provide empirical evidence that these bounds are sharp for a family of graphs and sets of initial influencers and can therefore be used as what is to our knowledge the first closed-form formulas for influence estimation. We show that these results generalize bounds obtained on the SIR model by Draief, Ganesh and Massouli\'e (\cite{draief2006thresholds}) and are closely related to recent results on percolation on finite inhomogeneous random graphs (\cite{bollobas2007phase}).

The rest of the paper is organized as follows. In \Sec{sec:model}, we recall the definition of Information Cascades Model and introduce useful notations. In \Sec{sec:bounds}, we derive theoretical bounds for the influence. In \Sec{sec:percolation}, we show that our results also apply to the fields of percolation and epidemiology and generalize existing results in these fields. In \Sec{sec:examples}, we illustrate our results by applying them on simple networks and retrieving well-known results. In \Sec{sec:exps}, we perform experiments in order to show that our bounds are sharp for a family of graphs and sets of initial nodes.

\section{Information Cascades Model}\label{sec:model}
\subsection{Influence in random networks and infection dynamics}

Let $\mathcal{G} = (\mathcal{V}, \mathcal{E})$ be a directed network of $n$ nodes and $A \subset \mathcal{V}$ be a set of $n_0$ nodes that are initially \emph{contagious} (\eg aware of a piece of information, infected by a disease or adopting a product). In the sequel, we will refer to $A$ as the \emph{influencers}. The behavior of the cascade is modeled using a probabilistic framework. The influencer nodes spread the contagion through the network by means of transmission through the edges of the network. More specifically, each contagious node can infect its neighbors with a certain probability. The \emph{influence} of $A$, denoted as $\sigma(A)$, is the expected number of nodes reached by the contagion originating from $A$, \ie
\begin{equation}
\sigma(A) = \sum_{v\in\mathcal{V}} \Prob{v \mbox{ is infected by the contagion } | A}.
\end{equation}

We consider three infection dynamics that we will show in the next section to be equivalent regarding the total number of infected nodes at the end of the epidemic.

\paragraph{Discrete-Time Information Cascades [$DTIC(\mathcal{P})$]}
At time $t=0$, only the influencers are infected. Given a matrix $\mathcal{P}=(p_{ij})_{ij} \in [0,1]^{n\times n}$, each node $i$ that receives the contagion at time $t$ may transmit it at time $t+1$ along its outgoing edge $(i, j)\in\mathcal{E}$ with probability $p_{ij}$. Node $i$ cannot make any attempt to infect its neighbors in subsequent rounds. The process terminates when no more infections are possible.

\paragraph{Continuous-Time Information Cascades [$CTIC(\mathcal{F},T)$]}
At time $t=0$, only the influencers are infected. Given a matrix $\mathcal{F}=(f_{ij})_{ij}$ of non-negative integrable functions, each node $i$ that receives the contagion at time $t$ may transmit it at time $s>t$  along its outgoing edge $(i, j)\in\mathcal{E}$ with stochastic rate of occurrence $f_{ij}(s-t)$. The process terminates at a given deterministic time $T>0$. This model is much richer than Discrete-time IC, but we will focus here on its behavior when $T = \infty$.

\paragraph{Random Networks [$RN(\mathcal{P})$]} Given a matrix $\mathcal{P}=(p_{ij})_{ij} \in [0,1]^{n\times n}$, each edge $(i,j) \in \mathcal{E}$ is removed independently of the others with probability $1-p_{ij}$. A node $i \in \mathcal{V}$ is said to be \emph{infected} if $i$ is linked to at least one element of $A$ in the spanning subgraph $\mathcal{G}'= (\mathcal{V}, \mathcal{E}')$ where $\mathcal{E}' \subset \mathcal{E}$ is the set of non-removed edges.
\\
\\
\indent For any $v \in \mathcal{V}$, we will designate by \emph{influence of $v$} the influence of the set containing only $v$, \ie $\sigma(\{v\})$. We will show in Section \ref{sec:perco} that, if $\mathcal{P}$ is symmetric and $\mathcal{G}$ undirected, these three infection processes are equivalent to $\emph{bond percolation}$ and the influence of a node $v$ is also equal to the expected size of the \emph{connected component} containing $v$ in $\mathcal{G}'$. This will make our results applicable to percolation in arbitrary networks. Following the percolation literature, we will denote as \emph{sub-critical} a cascade whose influence is not proportional to the size of the network $n$.

\subsection{The hazard matrix}

In order to linearize the influence problem and derive upper bounds, we introduce the concept of \emph{hazard matrix}, which describes the behavior of the information cascade. As we will see in the following, in the case of Continuous-time Information Cascades, this matrix gives, for each edge of the network, the integral of the instantaneous rate of transmission (known as hazard function). The spectral radius of this matrix will play a key role in the influence of the cascade.

\paragraph{Definition.}
For a given graph $\mathcal{G} = (\mathcal{V}, \mathcal{E})$ and edge transmission probabilities $p_{ij}$, let $\HazMat$ be the $n\times n$ matrix, denoted as the \emph{hazard matrix}, whose coefficients are
\begin{equation}
\HazMat_{ij} = \left\{
\begin{array}{ll}
-\ln(1-p_{ij}) &\mbox{if } (i, j)\in\mathcal{E}\\
0 &\mbox{otherwise}
\end{array}\right..
\end{equation}
Next lemma shows the equivalence between the three definitions of previous section.
\begin{lemma}\label{th:equivalence}
For a given graph $\mathcal{G} = (\mathcal{V}, \mathcal{E})$, set of influencers $A$, and transmission probabilities matrix $\mathcal{P}$, the probability of each node $i$ to be infected is equal under the infection dynamics $DTIC(\mathcal{P}),CTIC(\mathcal{F,\infty})$ and $RN(\mathcal{P})$, provided that for any $(i,j) \in \mathcal{E}$, $\int_0^\infty f_{ij}(t) dt= \HazMat_{ij}$.
\end{lemma}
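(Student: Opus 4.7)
The plan is to establish all three equivalences by coupling every dynamic to a single "live-edge" Bernoulli subgraph and arguing that, in each model, a node is infected if and only if it lies in the forward-reachable set of $A$ in that subgraph. Since the reachable set in $RN(\mathcal{P})$ is exactly this object by definition, it suffices to produce, for each of $DTIC(\mathcal{P})$ and $CTIC(\mathcal{F},\infty)$, a coupling that realises the model on the same space and makes the set of ever-infected nodes coincide almost surely with the set of nodes reachable from $A$ via live edges. This immediately gives equality of marginal infection probabilities.

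For $DTIC(\mathcal{P})$ the coupling is the standard one: draw independent $X_{ij}\sim\text{Bernoulli}(p_{ij})$ for each $(i,j)\in\mathcal{E}$ and declare edge $(i,j)$ live iff $X_{ij}=1$. Because every node only gets one transmission attempt along each outgoing edge, and that attempt succeeds iff $X_{ij}=1$, a simple induction on the discrete time step shows that node $v$ is infected at some finite time iff there exists a directed path from $A$ to $v$ consisting entirely of live edges. The joint law of the live-edge subgraph is identical to the one defining $RN(\mathcal{P})$, so the infection probabilities agree.

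For $CTIC(\mathcal{F},\infty)$ I would introduce, for every edge $(i,j)$, a latent transmission delay $\tau_{ij}\in(0,\infty]$ with survival function $\bar F_{ij}(t)=\exp\!\bigl(-\int_0^{t} f_{ij}(s)\,ds\bigr)$, independently across edges; the process is then realised by letting each node infected at time $t_i$ transmit to $j$ at time $t_i+\tau_{ij}$ (no transmission if $\tau_{ij}=\infty$). By the assumption on the integral of $f_{ij}$,
\begin{equation}
\Prob{\tau_{ij}<\infty}=1-\exp\!\Bigl(-\textstyle\int_0^\infty f_{ij}(s)\,ds\Bigr)=1-e^{-\HazMat_{ij}}=p_{ij},
\end{equation}
so the indicators $\one\{\tau_{ij}<\infty\}$ are independent $\text{Bernoulli}(p_{ij})$ random variables. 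Declaring edge $(i,j)$ live iff $\tau_{ij}<\infty$ therefore yields a live-edge subgraph with exactly the same distribution as in $RN(\mathcal{P})$. Because $T=\infty$, a node $v$ is infected iff some path from $A$ to $v$ has all its delays finite — which is precisely reachability in the live-edge subgraph — and an induction on path length makes this rigorous.

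The two routine obstacles are bookkeeping, not substance: one has to check that only finiteness of the $\tau_{ij}$ (not their actual values) matters when $T=\infty$, which uses that the minimum over finitely many finite delays along any path is itself finite; and one has to justify the inductive reachability argument when the graph contains cycles, which is handled by observing that the infection time of $v$ is the minimum over paths $A\!\to\!v$ of the sum of delays along the path, so the set of ever-infected nodes is the forward closure of $A$ in the live-edge subgraph. The main conceptual step, and the only place where the hypothesis on $\int_0^\infty f_{ij}$ enters, is the identity $1-e^{-\mathcal{H}_{ij}}=p_{ij}$ used above.
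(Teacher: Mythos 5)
Your proposal is correct and follows essentially the same route as the paper: both reduce each dynamic to a common live-edge Bernoulli subgraph (drawing independent edge indicators for $DTIC$/$RN$ and finite-delay indicators $\one\{\tau_{ij}<\infty\}$ for $CTIC$), with the hypothesis entering only through the identity $1-e^{-\HazMat_{ij}}=p_{ij}$. The paper phrases the conclusion as the fixed-point identity $X_i = 1 - \prod_j(1-X_jE_{ji})$ rather than as reachability of $A$ in the live-edge subgraph, but these are the same coupling and the same argument.
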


\paragraph{Definition.}
For a given set of influencers $A\subset \mathcal{V}$, we will denote as $\HazMat(A)$ the hazard matrix except for zeros along the columns whose indices are in $A$:
\begin{equation}
\HazMat(A)_{ij} = \one_{\{j\notin A\}} \HazMat_{ij}.
\end{equation}

We recall that for any square matrix $M$, its spectral radius $\rho(M)$ is defined by $ \rho(M)=\max_i (|\lambda_i|)$ where $\lambda_1,...,\lambda_n$ are the (possibly repeated) eigenvalues of matrix $M$.
We will also use that, when $M$ is a real square matrix with positive entries,
\begin{equation}
\rho \bigg(\frac{M+M^\top}{2}\bigg)=\sup_X \frac{X^\top MX}{X^\top X}.
\end{equation}

\paragraph{Remark.}
When the $p_{ij}$ are small, the hazard matrix is very close to the transmission matrix $\mathcal{P}$. This implies that, for low $p_{ij}$ values, the spectral radius of $\HazMat$ will be very close to that of $\mathcal{P}$. More specifically, a simple calculation holds
\begin{equation}
\rho(\mathcal{P}) \leq \rho(\HazMat) \leq \frac{-\ln(1-\|\mathcal{P}\|_\infty)}{\|\mathcal{P}\|_\infty} \rho(\mathcal{P}),
\end{equation}
where $\|\mathcal{P}\|_\infty = \max_{i,j} p_{ij}$. The relatively slow increase of $\frac{-\ln(1-x)}{x}$ for $x\rightarrow 1^-$ implies that the behavior of $\rho(\mathcal{P})$ and $\rho(\HazMat)$ will be of the same order of magnitude even for high (but lower than $1$) values of $\|\mathcal{P}\|_\infty$.


\section{Upper bounds for the influence of a set of nodes}\label{sec:bounds}

Given $A \subset \mathcal{V}$ the set of influencer nodes and $|A| = n_0 < n$, we derive here two upper bounds for the influence of $A$. The first bound (Proposition \ref{th:mainResult}) applies to any set of influencers $A$ such that $|A|=n_0$. Intuitively, this result correspond to a best-case scenario  (or a worst-case scenario, depending on the viewpoint), since we can target any set of nodes so as to maximize the resulting contagion.

\begin{proposition}\label{th:mainResult}
Define $\rho_c(A) =\rho(\frac{\HazMat(A)+\HazMat(A)^\top}{2})$. Then, for any $A$ such that $|A|=n_0 < n$, denoting by $\sigma(A)$ the expected number of nodes reached by the cascade starting from $A$:

\begin{equation}
\sigma(A) \leq n_0 + \gamma_1 (n-n_0),
\end{equation}

where $\gamma_1$ is the smallest solution in $[0, 1]$ of the following equation:
\begin{equation}
\gamma_1 - 1 + \exp \left(-\rho_c(A) \gamma_1 - \frac {\rho_c(A) n_0}{\gamma_1 (n-n_0)}\right) = 0.
\end{equation}
\end{proposition}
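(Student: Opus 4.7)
My plan is to combine a pointwise mean-field inequality for the infection probabilities with a variational argument on the symmetrised hazard matrix, and then use a convexity step to reduce to a uniform profile and obtain the transcendental fixed-point equation. By Lemma~\ref{th:equivalence} I would work throughout in the bond-percolation picture $RN(\mathcal{P})$; writing $p_v := \Prob{v\text{ infected}\mid A}$ with $p_v=1$ for $v\in A$, the target reduces to bounding the mean infection fraction $\beta := (\sigma(A)-n_0)/(n-n_0)$.

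\textbf{Mean-field inequality.} The first key step is to show $-\ln(1-p_v) \le (\HazMat^{\top}p)_v$ for every $v\notin A$. I would argue by conditioning on the random graph $\mathcal{G}^{-v}$ obtained from $RN(\mathcal{P})$ by deleting $v$ and its incident edges, letting $T$ be the set of vertices reached from $A$ in $\mathcal{G}^{-v}$. Since the edges incident to $v$ are independent of $\mathcal{G}^{-v}$, one has $\Prob{I_v^{c}\mid T} = \exp(-\sum_{u\in T\cup A}\HazMat_{uv})$. Taking expectation, applying Jensen to $x\mapsto e^{-x}$, and using the monotone-coupling bound $\Prob{u\in T}\le \Prob{I_u} = p_u$ (deletion only reduces reachability) delivers the claimed inequality.

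\textbf{Aggregation, spectral bound, and convex reduction.} Multiplying by $p_v$ and summing over $v\notin A$ gives $\sum_{v\notin A}\phi(p_v)\le p^{\top}\HazMat\,\tilde p$ with $\phi(x):=-x\ln(1-x)$ and $\tilde p := p-\one_A$. Because $\HazMat(A)$ differs from $\HazMat$ only by zeroing the columns indexed by $A$, a direct computation gives $\HazMat\,\tilde p = \HazMat(A)\,p$, and the variational identity recalled in the paper yields
\begin{equation*}
\sum_{v\notin A}\phi(p_v) \;\le\; p^{\top}\HazMat(A)\,p \;\le\; \rho_c(A)\,\|p\|^{2} \;=\; \rho_c(A)\Bigl(n_0 + \sum_{v\notin A}p_v^{2}\Bigr).
\end{equation*}
Setting $g(x):=\phi(x)-\rho_c(A)\,x^{2}$, this rearranges to $\sum_{v\notin A}g(p_v)\le \rho_c(A)\,n_0$. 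A direct computation gives $g''(x)=(2-x)/(1-x)^{2}-2\rho_c(A)\ge 0$ on $[0,1)$ for $\rho_c(A)\le 1$, so $g$ is convex with $g(0)=g'(0)=0$ and therefore nondecreasing; Jensen then forces $(n-n_0)\,g(\beta)\le \rho_c(A)\,n_0$, and inverting $g$ yields $\beta\le\gamma_1$ where $\gamma_1$ is the smallest root in $[0,1]$ of $(n-n_0)\,g(\gamma_1) = \rho_c(A)\,n_0$. Expanding $g$ and exponentiating recovers exactly the equation stated in the proposition.

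The main obstacle lies in the mean-field inequality: on a non-DAG network the naive independence argument fails, and the delete-a-vertex conditioning combined with the monotone coupling $\Prob{u\in T}\le p_u$ is what allows the exponential/Jensen trick to give the clean bound. Without this quadratic-form route, the crucial $p_v^{2}$ term on the right-hand side of the aggregated estimate (rather than a weaker linear $p_v$) cannot be produced, and the precise form of the fixed-point equation for $\gamma_1$ is lost.
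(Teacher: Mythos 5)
Your route to the key pointwise inequality $-\ln(1-p_v)\le(\HazMat^\top p)_v$ is genuinely different from the paper's and is valid: the paper derives it (Lemma~\ref{th:mainLemma}) from positive correlation of the variables $1-X_jE_{ji}$ via the FKG inequality, whereas you condition on the percolation configuration away from $v$, apply Jensen to $x\mapsto e^{-x}$, and use the monotonicity $\Prob{u\in T}\le p_u$; both give the same estimate. Your algebra is also correct in that $\HazMat\tilde p=\HazMat(A)p$, that $p^\top\HazMat(A)p\le\rho_c(A)\|p\|^2$ by the variational identity, and that the equation $(n-n_0)\bigl(-\gamma_1\ln(1-\gamma_1)-\rho_c(A)\gamma_1^2\bigr)=\rho_c(A)n_0$ is exactly the fixed-point equation of the proposition after exponentiating.

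The gap is in the convex-reduction step, and it is not cosmetic. The convexity $g''(x)=\frac{2-x}{(1-x)^2}-2\rho_c(A)\ge 0$ on $[0,1)$ holds only when $\rho_c(A)\le 1$ (indeed $g''(0)=2-2\rho_c(A)$, and $g$ is even decreasing near $0$ when $\rho_c(A)>1$), so your Jensen step $(n-n_0)\,g(\beta)\le\sum_{v\notin A}g(p_v)$ and the subsequent monotone inversion of $g$ are justified only in the sub-critical regime. The proposition, however, is asserted for every $A$ with no restriction on $\rho_c(A)$, and the super-critical case is precisely what feeds the second bullet of Corollary~\ref{th:simpleBounds}. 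The paper avoids this by never taking logarithms before averaging: it applies Jensen to the convex exponential with weights $u_i=\one_{\{i\notin A\}}Z_i$, obtaining $y\le 1-\exp\left(-\rho_c(A)y-\rho_c(A)n_0/z\right)$ for the weighted mean $y=(Z^\top Z-n_0)/(|Z|_1-n_0)$, converts to the plain mean via Cauchy--Schwarz ($z\le y(n-n_0)$), and closes with a contradiction argument resting on the convexity of $x\mapsto x-1+\exp(-\rho_c(A)x+c)$, which holds for every $\rho_c(A)\ge 0$. To repair your argument you must either restrict the claim to $\rho_c(A)<1$ or replace the Jensen-on-$g$ step by one that survives the loss of convexity, e.g.\ the paper's exponential-side Jensen; as written, the super-critical half of the proposition is unproven.
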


\begin{corollary}\label{th:simpleBounds}

Under the same assumptions: 

\begin{itemize}

\item if $\rho_c(A) < 1$,
\hspace{4em}
$\displaystyle \sigma(A) \leq n_0+\sqrt{\frac{\rho_c(A)}{1-\rho_c(A)}} \sqrt{n_0(n-n_0)}$
\item if $\rho_c(A) \geq 1$,
\hspace{1.5em}
$\displaystyle \sigma(A) \leq n - (n-n_0)\exp \left(-\rho_c(A)-\frac{2\rho_c(A)}{\sqrt{4n/n_0 - 3}-1}\right)$
\end{itemize}
In particular, when $\rho_c(A) < 1$, $\sigma(A) = O(\sqrt{n})$ and the regime is sub-critical.
\end{corollary}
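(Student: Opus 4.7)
The plan is to derive both bounds directly from the defining equation of Proposition~\ref{th:mainResult}. Set $c = n_0/(n-n_0)$ and $h(\gamma) = \gamma + c/\gamma$, so that the defining equation reads $\gamma_1 = 1 - \exp(-\rho_c(A)\, h(\gamma_1))$ and the proposition's estimate is equivalent to $\sigma(A) \leq n - (n-n_0)\exp(-\rho_c(A)\, h(\gamma_1))$. Both bounds then reduce to suitable control of $\gamma_1$ (or of $h(\gamma_1)$) in the two regimes.

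For the sub-critical regime ($\rho_c(A) < 1$), I apply $1-e^{-x}\leq x$ to the defining equation, obtaining $\gamma_1 \leq \rho_c(A)\gamma_1 + \rho_c(A)\, c/\gamma_1$. Multiplying through by $\gamma_1$ and rearranging gives $\gamma_1^2(1-\rho_c(A)) \leq \rho_c(A)\, c$, hence $\gamma_1 \leq \sqrt{\rho_c(A)\, c/(1-\rho_c(A))}$. Substituting back yields $\gamma_1(n-n_0) \leq \sqrt{\rho_c(A)/(1-\rho_c(A))}\,\sqrt{n_0(n-n_0)}$, which is exactly the first stated inequality. The $O(\sqrt{n})$ assertion then follows from $\sqrt{n_0(n-n_0)} \leq \sqrt{n_0 \, n}$ when $n_0$ is held fixed.

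For the super-critical regime ($\rho_c(A)\geq 1$), the target bound is equivalent to $h(\gamma_1) \leq 1 + 2/(\sqrt{4n/n_0 - 3}-1)$. Using only $\gamma_1 \leq 1$ handles the $\gamma_1$ summand, so the crux is to lower bound $\gamma_1$. I introduce the auxiliary point $\gamma_* = 2/(\sqrt{4/c+1}+1)$, which one verifies (by rationalization) satisfies the quadratic identity $\gamma_*^2 = c(1-\gamma_*)$ as well as $c/\gamma_* = 2/(\sqrt{4n/n_0-3}-1)$. It therefore suffices to prove the key claim $\gamma_1 \geq \gamma_*$, since then $h(\gamma_1) \leq 1 + c/\gamma_*$ immediately gives the bound.

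The main obstacle is this claim. The strategy is to analyze $F(\gamma) := \gamma - 1 + e^{-\rho_c(A)\, h(\gamma)}$, whose smallest root in $[0,1]$ is $\gamma_1$. A direct computation yields $F'(\gamma) = 1 + \rho_c(A)(c/\gamma^2 - 1)e^{-\rho_c(A)\, h(\gamma)}$, which is strictly positive on $(0,\sqrt{c}]$, so $F$ is strictly increasing there; and since $\gamma_*^2 = c(1-\gamma_*) < c$ forces $\gamma_* < \sqrt{c}$, it is enough to verify $F(\gamma_*) \leq 0$. Using $\rho_c(A) \geq 1$, this reduces to $h(\gamma_*) \geq -\ln(1-\gamma_*)$. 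Substituting $h(\gamma_*) = \gamma_*(2-\gamma_*)/(1-\gamma_*)$ (obtained from $c = \gamma_*^2/(1-\gamma_*)$) and $t = 1-\gamma_* \in (0,1)$ reduces everything to the elementary inequality $1/t - t + \ln t \geq 0$ on $(0,1)$, which follows from the fact that this expression vanishes at $t=1$ and has derivative $-(t^2-t+1)/t^2 < 0$. This establishes $\gamma_1 \geq \gamma_*$ and completes the proof.
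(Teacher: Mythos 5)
Your proof is correct. The sub-critical case is exactly the paper's argument: apply $1-e^{-x}\le x$ to the defining equation of $\gamma_1$ and solve the resulting quadratic inequality. In the super-critical case you arrive at the same pivotal fact as the paper, namely the lower bound $\gamma_1\ge\gamma_*$ where $\gamma_*$ is the positive root of $x^2+cx-c=0$ (the paper writes this as $\gamma_1(n-n_0)\ge \tfrac{n_0}{2}(\sqrt{4n/n_0-3}-1)$), followed by plugging $\gamma_1\le 1$ and $c/\gamma_1\le c/\gamma_*$ back into the defining equation --- but you establish that lower bound by a different mechanism. The paper works forward from the equation: discarding the factor $e^{-\rho_c(A)\gamma_1}$ gives $\gamma_1\ln\frac{1}{1-\gamma_1}\ge \rho_c(A)\,c\ge c$, and the inequality $-\ln(1-x)\le x/(1-x)$ converts this into $\gamma_1^2\ge c(1-\gamma_1)$, which is then solved for $\gamma_1$. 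You instead exhibit the candidate $\gamma_*$ explicitly, show $F(\gamma)=\gamma-1+e^{-\rho_c(A)h(\gamma)}$ is strictly increasing on $(0,\sqrt{c}\,]$, and check $F(\gamma_*)\le 0$ via the elementary inequality $1/t-t+\ln t\ge 0$ on $(0,1)$. Both routes are elementary and give the identical constant; the paper's is a couple of lines shorter, while yours makes transparent why $\gamma_*$ is exactly the right threshold and replaces the inversion of a quadratic inequality in $\gamma_1$ by a single sign check at a known point.
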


The second result (Proposition \ref{th:uniformResult}) applies in the case where $A$ is drawn from a uniform distribution over the ensemble of sets of $n_0$ nodes chosen amongst $n$ (denoted as $\mathcal{P}_{n_0}(\mathcal{V})$). This result corresponds to the average-case scenario in a setting where the initial influencer nodes are not known and drawn independently of the transmissions over each edge.

\begin{proposition}\label{th:uniformResult}
Define $\rho_c =\rho(\frac{\HazMat+\HazMat^\top}{2})$. Assume the set of influencers $A$ is drawn from a uniform distribution over $\mathcal{P}_{n_0}(\mathcal{V})$. Then, denoting by $\sigma_{\mbox{uniform}}$ the expected number of nodes reached by the cascade starting from $A$:

\begin{equation}
\sigma_{\mbox{uniform}} \leq n_0 + \gamma_2 (n-n_0),
\end{equation}

where $\gamma_2$ is the unique solution in $[0, 1]$ of the following equation:
\begin{equation}
\gamma_2 - 1 + \exp \left(-\rho_c \gamma_2 - \frac{\rho_c n_0}{n-n_0} \right) = 0.
\end{equation}
\end{proposition}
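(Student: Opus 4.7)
The strategy parallels that of Proposition \ref{th:mainResult}, but exploits the symmetry of the uniform law on $A$ to eliminate the column-masking matrix $\HazMat(A)$ and obtain a cleaner one-dimensional fixed-point equation. By Lemma \ref{th:equivalence}, it suffices to work in the Random Network formulation $RN(\mathcal{P})$, in which each edge $(i,j)$ is independently kept with probability $p_{ij}$ and a node $v$ is infected iff it is connected in the surviving subgraph to some node in $A$.

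For each $v \in \mathcal{V}$, let $q_v = \Prob{v \text{ infected}}$, where the probability is taken jointly over edge deletions and the uniform choice of $A$. The key analytical step is to establish, using $1 - p_{uv} = e^{-\HazMat_{uv}}$ together with a forward-looking exponential argument (treating $v$ as infected either via its in-neighbors through the random edges or via membership in $A$), the self-consistent inequality
\begin{equation}
1 - q_v \;\geq\; \exp\!\left( -\sum_u \HazMat_{uv}\, q_u \;-\; \frac{n_0}{n}\sum_u \HazMat_{uv} \right),
\end{equation}
where the first term in the exponent captures infection coming through the random edges and the second, obtained by averaging over the uniform law of $A$, captures the probability that $v$ is directly seeded.

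Summing this inequality over $v \notin A$ and using the Rayleigh-Ritz characterization $\sup_X (X^\top \HazMat X)/(X^\top X) = \rho_c$ together with Cauchy-Schwarz to pass from $\ell_2$ to $\ell_1$ norms of $q$, I would collapse the vector-level bound to a scalar inequality on $\gamma := \frac{1}{n-n_0}\sum_{v \notin A}q_v$ of the form
\begin{equation}
\gamma \;\leq\; 1 - \exp\!\left( -\rho_c\,\gamma \;-\; \frac{\rho_c\,n_0}{n-n_0} \right).
\end{equation}
The map $h(\gamma) := 1 - \exp(-\rho_c\gamma - \rho_c n_0/(n-n_0))$ is strictly concave and strictly increasing on $[0,1]$ with $h(0) > 0$ and $h(1) < 1$, so the equation $h(\gamma) = \gamma$ has a unique solution $\gamma_2 \in (0,1)$; a standard monotone iteration argument then gives $\gamma \leq \gamma_2$, which yields the claimed bound $\sigma_{\text{uniform}} = n_0 + \gamma(n-n_0) \leq n_0 + \gamma_2(n-n_0)$.

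The main technical obstacle is the derivation of the exponential self-consistent inequality of the second step in a way that simultaneously handles the dependencies between the ``$u$ is infected'' events and averages the contribution of $A$ to exactly $\rho_c n_0/(n-n_0)$. The extra symmetry afforded by the uniform law on $A$ is crucial: it lets the boundary term be written as an inner product against the vector $\frac{n_0}{n}\mathbf{1}$ rather than the indicator of $A$, which is precisely what allows one to replace $\HazMat(A)$ by $\HazMat$ and drop the $\gamma_1$ factor that appears in the denominator of the exponent in Proposition \ref{th:mainResult}.
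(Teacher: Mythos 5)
Your overall architecture (a self-consistent exponential inequality on the infection probabilities, then Rayleigh quotient plus Cauchy--Schwarz to collapse to a scalar fixed-point inequality, then a concavity argument for uniqueness and domination) is the same as the paper's. But the pivotal step --- the inequality
\begin{equation}
1 - q_v \geq \exp\Bigl(-\textstyle\sum_u \HazMat_{uv}\, q_u - \frac{n_0}{n}\sum_u \HazMat_{uv}\Bigr)
\end{equation}
--- is false as stated, and you yourself flag its derivation as the unresolved ``main technical obstacle.'' The seeding event $\{v \in A\}$ contributes an \emph{additive} probability $n_0/n$ to $q_v$; it cannot be absorbed into the exponent as a term proportional to $\sum_u \HazMat_{uv}$, which measures the hazard mass entering $v$ and has nothing to do with the chance that $v$ is seeded. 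Concretely, take $\mathcal{E} = \emptyset$: then $\HazMat = 0$ and your inequality forces $q_v \leq 0$, while in fact $q_v = \Prob{v \in A} = n_0/n > 0$. The paper instead keeps the seeding term outside the exponential: conditioning on $\{i \in A\}$ versus $\{i \notin A\}$ gives $\Exp{X_i} \leq \frac{n_0}{n} + \frac{n-n_0}{n}\bigl(1 - \exp(-\sum_j \HazMat_{ji}\Exp{X_j \mid i \notin A})\bigr)$ via Lemma \ref{th:mainLemma} and Jensen's inequality for conditional expectations, and then uses $\Exp{X_j \mid i \notin A} \leq \frac{n}{n-n_0}\Exp{X_j}$ to decouple from $A$. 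This yields $\Exp{X_i} \leq 1 - \frac{n-n_0}{n}\exp\bigl(-\frac{n}{n-n_0}\sum_j \HazMat_{ji}\Exp{X_j}\bigr)$, i.e.\ a prefactor $\frac{n-n_0}{n}$ in front of the exponential and a multiplicative factor $\frac{n}{n-n_0}$ inside it --- structurally different from your additive exponent term. The stated equation for $\gamma_2$ only emerges after the reparametrization $\gamma_2 = \frac{n_0}{n} + \frac{n-n_0}{n}\gamma$ of the fixed point $\gamma$ of that prefactored equation.

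Two smaller issues: your quantity $\gamma = \frac{1}{n-n_0}\sum_{v \notin A} q_v$ is ill-defined, since $A$ is random while $q_v$ is an unconditional probability (the paper works with $y = Z^\top Z / |Z|_1$ over \emph{all} nodes and separately bounds $|Z|_1 \leq ny$ by Cauchy--Schwarz); and your concavity/monotone-iteration argument for uniqueness of $\gamma_2$ and for $\gamma \leq \gamma_2$ is fine and matches the paper's convexity argument in spirit. The gap is thus confined to, but fatal at, the derivation of the self-consistent inequality.
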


\begin{corollary}\label{th:uniformsimpleBounds}

Under the same assumptions: 

\begin{itemize}

\item if $\rho_c < 1$,
\hspace{8em}
$\displaystyle \sigma_{\mbox{uniform}} \leq \frac{n_0}{1-\rho_c}$
\item if $\rho_c \geq 1$,
\hspace{3.5em}
$\displaystyle \sigma_{\mbox{uniform}} \leq n - (n-n_0)\exp \left(- \frac{\rho_c}{1-\frac{n_0}{n}} \right)$
\end{itemize}
In particular, when $\rho_c < 1$, $\sigma(A) = O(1)$ and the regime is sub-critical.
\end{corollary}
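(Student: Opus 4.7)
The plan is to derive both bounds by directly manipulating the fixed-point equation supplied by Proposition~\ref{th:uniformResult}. Setting $\alpha = \rho_c n_0/(n-n_0)$ for brevity, the defining equation reads $\gamma_2 = 1 - \exp(-\rho_c \gamma_2 - \alpha)$, and the corollary reduces to showing two explicit inequalities on $\gamma_2$ that produce the stated bounds on $\sigma_{\mbox{uniform}}$ once substituted into $\sigma_{\mbox{uniform}} \leq n_0 + \gamma_2(n-n_0)$.

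For the sub-critical case ($\rho_c < 1$), I would apply the elementary bound $1 - e^{-x} \leq x$ to the right-hand side of the defining equation. This yields $\gamma_2 \leq \rho_c \gamma_2 + \alpha$, which rearranges, using $1 - \rho_c > 0$, to $\gamma_2 \leq \alpha/(1-\rho_c)$. Substituting into $n_0 + \gamma_2(n-n_0)$ and noting $\alpha(n-n_0) = \rho_c n_0$ produces $\sigma_{\mbox{uniform}} \leq n_0 + \rho_c n_0/(1-\rho_c) = n_0/(1-\rho_c)$, as claimed.

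For the super-critical case ($\rho_c \geq 1$), the previous argument fails because $1 - \rho_c$ has the wrong sign, so I would instead use the trivial bound $\gamma_2 \leq 1$ inside the exponential. Since the map $x \mapsto 1 - e^{-x}$ is monotone, this gives $\gamma_2 \leq 1 - e^{-\rho_c - \alpha}$ directly from the fixed-point equation. Rewriting the exponent as $\rho_c + \alpha = \rho_c \cdot n/(n-n_0) = \rho_c/(1 - n_0/n)$ and substituting into the bound on $\sigma_{\mbox{uniform}}$ recovers the stated formula.

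Neither step presents a real obstacle; both are one-line monotonicity arguments applied to the same fixed-point equation. The only thing to verify carefully is that each inequality points in the correct direction: the linearization $1 - e^{-x} \leq x$ upper bounds $\gamma_2$ only because $1 - \rho_c$ is positive, which is precisely why this argument is restricted to the sub-critical regime, while the crude bound $\gamma_2 \leq 1$ is always valid and is the natural substitute when $\rho_c$ is large. The $O(1)$ asymptotic statement then follows by inspection of $n_0/(1-\rho_c)$.
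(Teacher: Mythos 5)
Your proof is correct and follows essentially the same route as the paper: the sub-critical bound comes from the linearization $1-e^{-x}\le x$ applied to the fixed-point equation for $\gamma_2$ (equivalently, the paper's $e^{z}\ge 1+z$), and the super-critical bound from the crude estimate $\gamma_2\le 1$ inside the exponential. Nothing is missing.
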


Note that, in the case of undirected networks and when $p_{ij}=p$, $\rho_c = -\ln(1-p)\rho(\AdjMat)$ where $\AdjMat$ is the adjacency matrix of the network.

\section{Application to epidemiology and percolation}\label{sec:percolation}
Building on the celebrated equivalences between the fields of percolation, epidemiology and influence maximization, we show that our results generalize existing results in these fields.

\subsection{Susceptible-Infected-Removed (SIR) model in epidemiology}
We show here that Proposition 1 further improves results on the SIR model in epidemiology. This widely used model was introduced by Kermac and McKendrick (\cite{kermack1932contributions}) in order to model the propagation of a disease in a given population. In this setting, nodes represent individuals, that can be in one of three possible states, susceptible (S), infected (I) or removed (R). At $t=0$, a subset $A$ of $n_0$ nodes is infected and the epidemic spreads according to the following evolution. Each infected node transmits the infection along its outgoing edge $(i,j) \in \mathcal{E}$ at stochastic rate of occurrence $\beta$ and is removed from the graph at stochastic rate of occurrence $\delta$. The process ends for a given $T>0$. It is straightforward that, if the removed events are not observed, this infection process is equivalent to $CTIC(\mathcal{F},T)$ where for any $(i,j) \in \mathcal{E}$,$f_{ij}(t)=\beta \exp(-\delta t)$. The hazard matrix $\mathcal{H}$ is therefore equal to  $\frac{\beta}{\delta} \AdjMat$ where $\AdjMat=\big(\one_{\{(i,j)\in \mathcal{E}\}}\big)_{ij}$ is the adjacency matrix of the underlying network. Note that, by Lemma \ref{th:equivalence}, our results can be used in order to model the total number of infected nodes in a setting where infection and recovery rates of a given node exhibit a non-exponential behavior. For instance, incubation periods for different individuals generally follow a log-normal distribution \cite{nelson2007epidemiology}, which indicates that continuous-time IC with a log-normal rate of removal might be well-suited to model some kind of infections.

It was recently shown by Draief, Ganesh and Massouli\'e (\cite{draief2006thresholds}) that, in the case of undirected networks, and if $\beta \rho(\mathcal{A})<\delta$, 
\begin{equation}
\label{eqn:Massoulie}
\sigma(A) \leq \frac{\sqrt{n n_0}}{1-\frac{\beta}{\delta} \rho(\mathcal{A})}.
\end{equation}
This result shows, that, when $\rho(\mathcal{H})=\frac{\beta}{\delta} \rho(\AdjMat)<1$, the influence of set of nodes $A$ is $O(\sqrt{n})$. We show in the next lemma that this result is a direct consequence of Corollary \ref{th:simpleBounds}: the condition $\rho_c(\mathcal{A})<1$ is weaker than $\rho(\mathcal{H})<1$ and, under these conditions, the bound of Corollary \ref{th:simpleBounds} is tighter.
\begin{lemma}\label{th:lemmaMassoulie}
For any symmetric adjacency matrix $\mathcal{A}$, initial set of influencers $A$ such that $|A|=n_0 < n$, $\delta > 0$ and $\beta  < \frac{\delta}{\rho(\mathcal{A})}$, we have simultaneously $\rho_c(A) \leq \frac{\beta}{\delta} \rho(\mathcal{A})$ and 
\begin{equation}
n_0+\sqrt{\frac{\rho_c(A)}{1-\rho_c(A)}} \sqrt{n_0(n-n_0)} \leq \frac{\sqrt{n n_0}}{1-\frac{\beta}{\delta} \rho(\mathcal{A})},
\end{equation}
where the condition $\beta  < \frac{\delta}{\rho(\mathcal{A})}$ imposes that the regime is sub-critical.
\end{lemma}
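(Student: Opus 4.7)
The plan is to establish the two assertions of the lemma separately. For the spectral radius bound, I would exploit the key algebraic observation that since $\mathcal{A}$ is symmetric, so is $\HazMat = (\beta/\delta)\AdjMat$, and moreover $\HazMat(A) = \HazMat D$, where $D$ denotes the diagonal matrix with entries $D_{jj} = \one_{\{j \notin A\}}$. Applying the variational characterization recalled in Section~\ref{sec:model},
\begin{equation*}
\rho_c(A) \;=\; \sup_{X \neq 0}\, \frac{X^\top \HazMat\, D\, X}{X^\top X} \;=\; \sup_{X \neq 0}\, \frac{X^\top \HazMat\, Y}{\|X\|^2}, \qquad Y := DX.
\end{equation*}
Cauchy-Schwarz together with the identification of the operator norm with the spectral radius for a symmetric matrix gives $X^\top \HazMat\, Y \leq \|X\|\,\|\HazMat Y\| \leq \rho(\HazMat)\,\|X\|\,\|Y\|$, and since $Y$ is obtained from $X$ by zeroing out the coordinates indexed by $A$, $\|Y\| \leq \|X\|$. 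This yields $\rho_c(A) \leq \rho(\HazMat) = (\beta/\delta)\rho(\AdjMat)$.

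For the comparison of the two upper bounds, write $r := \rho_c(A)$ and $R := (\beta/\delta)\rho(\AdjMat)$. The map $t \mapsto \sqrt{t/(1-t)}$ is increasing on $[0,1)$, so the first part of the lemma allows me to replace $r$ by $R$ in the left-hand side and reduce matters to proving
\begin{equation*}
n_0 + \sqrt{\frac{R}{1-R}}\,\sqrt{n_0(n-n_0)} \;\leq\; \frac{\sqrt{n\, n_0}}{1-R}.
\end{equation*}
Multiplying both sides by $(1-R)/\sqrt{n_0}$ and factoring out $\sqrt{1-R}$ on the left turns this inequality into
\begin{equation*}
\sqrt{1-R}\,\bigl(\sqrt{1-R}\sqrt{n_0} + \sqrt{R}\sqrt{n-n_0}\bigr) \;\leq\; \sqrt{n}.
\end{equation*}
The Cauchy-Schwarz inequality applied to the vectors $(\sqrt{1-R},\sqrt{R})$ and $(\sqrt{n_0},\sqrt{n-n_0})$ delivers $\sqrt{1-R}\sqrt{n_0} + \sqrt{R}\sqrt{n-n_0} \leq \sqrt{(1-R)+R}\cdot\sqrt{n_0+(n-n_0)} = \sqrt{n}$, and since $\sqrt{1-R} \leq 1$ the inequality closes.

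I expect the main obstacle to lie in part (1): one has to recognize the factorization $\HazMat(A) = \HazMat D$ as the product of a symmetric matrix with a diagonal contraction, which is the structural feature that makes the spectral radius drop when columns are zeroed. Once this is identified, part (2) is a purely algebraic manipulation relying on the monotonicity of $t\mapsto t/(1-t)$ and a single application of Cauchy-Schwarz, so no serious difficulty is expected there.
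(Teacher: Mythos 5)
Your proof is correct, but both halves take a genuinely different route from the paper's. For the spectral radius bound, the paper observes the entrywise domination $0 \leq \frac{\HazMat(A)+\HazMat(A)^\top}{2} \leq \frac{\beta}{\delta}\mathcal{A}$ and invokes the Perron--Frobenius monotonicity of the spectral radius for nonnegative matrices; you instead use the factorization $\HazMat(A)=\HazMat D$ with $D$ a diagonal contraction, the variational identity already stated in Section~\ref{sec:model}, and Cauchy--Schwarz plus the fact that the operator norm of the symmetric matrix $\HazMat$ equals $\rho(\HazMat)$. Your version is more self-contained (it only needs tools the paper has already set up) but is specific to the structure $\HazMat(A)=\HazMat D$, whereas the Perron--Frobenius argument would apply to any entrywise-dominated perturbation. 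For the comparison of the two bounds, the paper studies the sign of $f(\rho)=n_0+\sqrt{\rho/(1-\rho)}\sqrt{n_0(n-n_0)}-\sqrt{nn_0}/(1-\rho)$ via a derivative computation (which, incidentally, is stated with an incorrect expression for $f'$, though the conclusion $f<0$ holds); your purely algebraic route --- factoring out $\sqrt{1-R}$ and applying Cauchy--Schwarz to $(\sqrt{1-R},\sqrt{R})$ and $(\sqrt{n_0},\sqrt{n-n_0})$ --- is cleaner and avoids calculus entirely. You also handle explicitly, via the monotonicity of $t\mapsto\sqrt{t/(1-t)}$, the passage from the inequality at a common value of $\rho$ to the asymmetric statement with $\rho_c(A)$ on the left and $\frac{\beta}{\delta}\rho(\mathcal{A})$ on the right, a step the paper leaves implicit.
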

 
Moreover, these new bounds capture with more accuracy the behavior of the influence in extreme cases. In the limit $\beta \rightarrow 0$, the difference between the two bounds is significant, because Proposition \ref{th:mainResult} yields $\sigma(A) \rightarrow n_0$ whereas (\ref{eqn:Massoulie}) only ensures $\sigma(A) \leq \sqrt{n n_0}$. When $n=n_0$, Proposition \ref{th:mainResult} also ensures that $\sigma(A) = n_0$ whereas (\ref{eqn:Massoulie}) yields $ \sigma(A) \leq \frac{n_0}{1-\frac{\beta}{\delta} \rho(\mathcal{A})} $. Secondly, Proposition \ref{th:mainResult} gives also bounds in the case $\beta \rho(\mathcal{A}) \geq \delta$. Finally, Proposition \ref{th:mainResult} applies to more general cases that the classical homogeneous SIR model, and allows infection and recovery rates to vary across individuals. 

\subsection{Bond percolation} \label{sec:perco}

Given a finite undirected graph $\mathcal{G}= (\mathcal{V}, \mathcal{E})$, \emph{bond percolation} theory describes the behavior of connected clusters of the spanning subgraph of $\mathcal{G}$ obtained by retaining a subset $\mathcal{E}' \subset \mathcal{E}$ of edges of $\mathcal{G}$ according to a given distribution on $\mathcal{P}(\mathcal{E})$.When these removals occur independently along each edge with same probability $1-p$, this process is called \emph{homogeneous} percolation and is fairly well known (see e.g \cite{janson2011random}). The \emph{inhomogeneous} case, where the independent edge removal probabilities $1-p_{ij}$ vary across the edges, is more intricate and has been the subject of recent studies. In particular, results on critical probabilities and size of the giant component have been obtained by Bollobas, Janson and Riordan in \cite{bollobas2007phase}. However, these bounds hold for a particular class of asymptotic graphs (inhomogeneous random graphs) when $n \rightarrow \infty$. In the next lemma, we show that our results can be used in order to obtain bounds that hold in expectation for any fixed graph. 
\begin{lemma}
\label{th:BondPerco}
Let $\mathcal{G}= (\mathcal{V}, \mathcal{E})$ be an undirected network where each edge $(i, j) \in \mathcal{E}$ has an independent probability $1-p_{ij}$ of being removed. Then, for any $v \in \mathcal{V}$, the expected size of the connected component containing $v$ is equal to the influence of $v$ in $\mathcal{G}$ under the infection process $DTIC(\mathcal{P})$.
\end{lemma}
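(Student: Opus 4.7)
The plan is to use Lemma \ref{th:equivalence} to reduce the claim to a statement about the random networks model $RN(\mathcal{P})$, and then to show, via a coupling based on a lazy breadth-first exploration from $v$, that the set of nodes reachable from $v$ in the directed random subgraph has the same distribution as the connected component of $v$ under bond percolation. Equality of expected sizes then follows by linearity.

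First, I would regard the undirected network $\mathcal{G}$ as a directed network in which each undirected edge $\{i,j\}$ gives rise to two opposite directed edges, each with transmission probability $p_{ij}$. Lemma \ref{th:equivalence} then yields that, for every $u \in \mathcal{V}$, the probability that $u$ is infected under $DTIC(\mathcal{P})$ starting from $v$ equals the probability that $u$ is reachable from $v$ in the random subgraph produced by $RN(\mathcal{P})$. Summing over $u$, the influence $\sigma(\{v\})$ under $DTIC(\mathcal{P})$ equals the expected number of nodes reachable from $v$ in $RN(\mathcal{P})$.

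The second, more delicate step is to compare this expected reachable-set size with the expected size of the connected component of $v$ under bond percolation. I would run a lazy BFS from $v$ in the directed random subgraph: at each step, pick a frontier node $i$, enumerate its out-neighbors $j \notin R$, reveal the Bernoulli$(p_{ij})$ coin associated with the directed edge $(i,j)$, and add $j$ to $R$ if the coin is $1$. The key observation is that, for any undirected edge $\{i,j\}$, the BFS reveals at most one of the two directed coins $c^{\to}_{ij}, c^{\to}_{ji}$, namely the one pointing outward from whichever endpoint enters $R$ first; the reverse coin, if it were to be queried later, would always point into an already-reached node and so is never examined. Hence exactly one Bernoulli$(p_{ij})$ coin per undirected edge governs the exploration.

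The same lazy BFS can be performed in bond percolation using the Bernoulli$(p_{ij})$ coin for each undirected edge $\{i,j\}$. I would couple the two processes by identifying the coin revealed along each undirected edge in $RN(\mathcal{P})$ with the corresponding bond percolation coin. Under this coupling, both explorations reveal the same coin outcomes along the same edges, so the reachable set from $v$ in $RN(\mathcal{P})$ equals almost surely the connected component of $v$ under bond percolation. The two distributions therefore coincide, and in particular their expected sizes agree; combined with the first step, this finishes the argument. The main technical point, and the step I expect to require the most care, is justifying that each undirected edge is examined from only one side during the BFS, which hinges on the fact that once an edge has been probed from one endpoint, the other endpoint either is already reached or becomes reached only along a different path, and thus never has cause to probe the edge back.
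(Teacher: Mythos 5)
Your argument is correct and rests on the same key observation as the paper's proof: for each undirected edge $\{i,j\}$ at most one of the two directed coins is ever consulted (the paper phrases this as the fact that the event that $i$ attempts to infect $j$ and the event that $j$ attempts to infect $i$ never occur in the same epidemic), so a single symmetric Bernoulli$(p_{ij})$ variable per edge can be substituted without changing the law of the infected set. Your lazy-BFS coupling is simply a more explicit, algorithmic rendering of that same substitution, so the approach is essentially identical to the paper's.
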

We now derive an upper bound for $C_1(\mathcal{G'})$, the size of the largest connected component of the spanning subgraph $\mathcal{G}'=(\mathcal{V}, \mathcal{E}')$. In the following, we will denote by $\Exp{C_1(\mathcal{G}')}$ the expected value of this random variable, given $\mathcal{P}=(p_{ij})_{ij}$.
\begin{proposition}
\label{th:sizecomponent}
Let $\mathcal{G} = (\mathcal{V}, \mathcal{E})$ be a connected undirected network where each edge $(i, j) \in \mathcal{E}$ has an independent probability $1-p_{ij}$ of being removed. The expected size of the largest connected component of the resulting subgraph $\mathcal{G}'$ is upper bounded by:
\begin{equation}
\label{eqn:sizecomp}
\Exp{C_1(\mathcal{G}')} \leq n \sqrt{\gamma_3},
\end{equation}
where $\gamma_3$ is the unique solution in $[0, 1]$ of the following equation:
\begin{equation}
\gamma_3 - 1 + \frac{n-1}{n}\exp \left(-\frac{n}{n-1}\rho(\HazMat) \gamma_3 \right) = 0.
\end{equation}
Moreover, the resulting network has a probability of being connected upper bounded by:
\begin{equation}
\label{eqn:Gconnecte}
\Prob{\mathcal{G}' \mbox{ is connected}} \leq \gamma_3.
\end{equation}
\end {proposition}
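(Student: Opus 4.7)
The plan is to reduce both bounds to Proposition \ref{th:uniformResult} applied with a single uniformly random initial influencer ($n_0=1$), using the classical size-biasing identity linking the expected component of a uniform vertex to the sum of squared component sizes. First I would pick a vertex $V$ uniformly at random in $\mathcal{V}$, independently of the edge removals. Because $\mathcal{G}$ is undirected, $\HazMat$ is symmetric, so $\rho_c=\rho(\HazMat)$. By Lemma \ref{th:BondPerco}, the expected size of the component of $v$ in $\mathcal{G}'$ equals the influence $\sigma(\{v\})$, so averaging over $V$ yields $\Exp{|C(V)|}=\sigma_{\mbox{uniform}}$ for $n_0=1$. Conditionally on $\mathcal{G}'$, the probability that $V$ falls in component $C_i$ is $|C_i|/n$, hence $\Exp{|C(V)|}=\frac{1}{n}\Exp{\sum_i|C_i|^2}$. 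Combining with Proposition \ref{th:uniformResult} therefore gives
\begin{equation}
\Exp{\sum_i |C_i|^2} \;=\; n\,\sigma_{\mbox{uniform}} \;\leq\; n\bigl(1+\gamma_2(n-1)\bigr),
\end{equation}
where $\gamma_2$ is the unique solution in $[0,1]$ of $\gamma_2-1+\exp\bigl(-\rho(\HazMat)\gamma_2-\rho(\HazMat)/(n-1)\bigr)=0$.

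The second step is a reparametrization: set $\gamma_3:=(1+\gamma_2(n-1))/n$, so that the previous right-hand side equals $n^2\gamma_3$, and verify that this $\gamma_3$ satisfies the equation stated in the proposition. A direct computation gives $\rho(\HazMat)\gamma_2+\rho(\HazMat)/(n-1)=\tfrac{n}{n-1}\rho(\HazMat)\gamma_3$ and $\gamma_2-1=\tfrac{n}{n-1}(\gamma_3-1)$; plugging these into the defining equation of $\gamma_2$ and multiplying by $(n-1)/n$ reproduces exactly $\gamma_3-1+\tfrac{n-1}{n}\exp\bigl(-\tfrac{n}{n-1}\rho(\HazMat)\gamma_3\bigr)=0$. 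Uniqueness of $\gamma_3\in[0,1]$ is immediate by convexity: the left-hand side is negative at $0$, positive at $1$, and has a monotone derivative.

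The two announced inequalities then follow from elementary comparisons on $\sum_i|C_i|^2$. Since the largest component satisfies $C_1(\mathcal{G}')^2\leq \sum_i|C_i|^2$, Jensen's inequality yields
\begin{equation}
\Exp{C_1(\mathcal{G}')}^2 \;\leq\; \Exp{C_1(\mathcal{G}')^2} \;\leq\; \Exp{\sum_i |C_i|^2} \;\leq\; n^2\gamma_3,
\end{equation}
which proves (\ref{eqn:sizecomp}). For the second bound, $\mathcal{G}'$ is connected exactly when $\sum_i|C_i|^2=n^2$, so $n^2\,\Prob{\mathcal{G}'\mbox{ is connected}}\leq \Exp{\sum_i|C_i|^2}\leq n^2\gamma_3$, whence (\ref{eqn:Gconnecte}).

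I expect the main obstacle to be the algebraic verification in the second step, showing that the natural reparametrization of $\gamma_2$ reproduces exactly the transcendental equation for $\gamma_3$; everything else is either immediate from previously established results or a standard size-biasing argument. Care is needed in tracking how the prefactor $(n-1)/n$ and the rescaling inside the exponential match up when translating between the two fixed-point equations.
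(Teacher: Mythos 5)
Your proposal is correct and follows essentially the same route as the paper: apply Proposition \ref{th:uniformResult} with $n_0=1$, reparametrize $\gamma_2$ into $\gamma_3=(1+(n-1)\gamma_2)/n$, use the size-biased second moment of the component containing a uniform vertex together with $\Exp{C_1^2}\geq\Exp{C_1}^2$ for the first bound, and a trivial lower bound on $\sigma_{\mbox{uniform}}$ restricted to the event of connectivity for the second. Your write-up is if anything slightly more explicit than the paper's (the identity $\Exp{|C(V)|}=\frac{1}{n}\Exp{\sum_i|C_i|^2}$ and the algebraic check of the fixed-point equation), but there is no substantive difference in approach.
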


In the case $\rho(\mathcal{H})<1$, we can further simplify our bounds in the same way than for Propositions \ref{th:mainResult} and \ref{th:uniformResult}.
\begin{corollary} \label{th:simpleSize}
In the case $\rho(\mathcal{H})<1$, $\Exp{C_1(\mathcal{G}')} \leq \sqrt{\frac{n}{1-\rho(\HazMat)}}$.
\end{corollary}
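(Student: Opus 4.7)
The plan is to start from Proposition \ref{th:sizecomponent}, which already gives the bound $\Exp{C_1(\mathcal{G}')} \leq n\sqrt{\gamma_3}$, and show that under the extra assumption $\rho(\HazMat) < 1$ one can replace the implicit quantity $\gamma_3$ by an explicit closed-form upper bound. Concretely, it suffices to prove
\begin{equation*}
\gamma_3 \;\leq\; \frac{1}{n\bigl(1-\rho(\HazMat)\bigr)},
\end{equation*}
because plugging this into $n\sqrt{\gamma_3}$ yields exactly $\sqrt{n/(1-\rho(\HazMat))}$.

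To obtain this bound on $\gamma_3$, the plan is to linearize the defining equation
\begin{equation*}
\gamma_3 - 1 + \tfrac{n-1}{n}\exp\!\left(-\tfrac{n}{n-1}\rho(\HazMat)\gamma_3\right) = 0
\end{equation*}
using the elementary convexity inequality $e^{-x} \geq 1 - x$, valid for every real $x$. Applying it to $x = \tfrac{n}{n-1}\rho(\HazMat)\gamma_3$ gives
\begin{equation*}
\gamma_3 = 1 - \tfrac{n-1}{n}\exp\!\left(-\tfrac{n}{n-1}\rho(\HazMat)\gamma_3\right) \;\leq\; 1 - \tfrac{n-1}{n} + \rho(\HazMat)\,\gamma_3 \;=\; \tfrac{1}{n} + \rho(\HazMat)\,\gamma_3,
\end{equation*}
which rearranges (using $\rho(\HazMat) < 1$) into $\gamma_3 \leq \tfrac{1}{n(1-\rho(\HazMat))}$, the desired estimate. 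Substitution into Proposition \ref{th:sizecomponent} then concludes the proof.

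No step seems to pose a real obstacle here: Proposition \ref{th:sizecomponent} does all the combinatorial work, and the corollary is a purely analytic simplification. The only minor point to be careful about is verifying that the solution $\gamma_3$ is indeed the one to which the linearization applies (it lies in $[0,1]$, so $x \geq 0$ and the inequality $e^{-x}\geq 1-x$ is comfortably in its useful range) and that dividing by $1-\rho(\HazMat)$ is legitimate, which is guaranteed exactly by the hypothesis $\rho(\HazMat) < 1$.
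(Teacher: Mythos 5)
Your proof is correct and is essentially the paper's own argument: the paper states that Corollary \ref{th:simpleSize} follows by ``simplifying in the same way as for Propositions \ref{th:mainResult} and \ref{th:uniformResult}'', i.e.\ exactly the linearization $e^{-x}\ge 1-x$ applied to the defining equation of $\gamma_3$, which yields $\gamma_3 \le \tfrac{1}{n(1-\rho(\HazMat))}$ and hence the claimed bound after substitution into $\Exp{C_1(\mathcal{G}')}\le n\sqrt{\gamma_3}$. The algebra checks out, and your remarks about $\gamma_3\in[0,1]$ and the hypothesis $\rho(\HazMat)<1$ cover the only points requiring care.
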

Whereas our results hold for any $n \in \mathbb{N}$, classical results in percolation theory study the asymptotic behavior of sequences of graphs when $n \rightarrow \infty$. In order to further compare our results, we therefore consider sequences of spanning subgraphs $(\mathcal{G'}_n)_{n \ \in \mathbb{N}}$, obtained by removing each edge of graphs of $n$ nodes $(\mathcal{G}_n)_{n \ \in \mathbb{N}}$ with probability $1-p_{ij}^{n}$. A previous result (\cite{bollobas2007phase}, Corollary 3.2 of section 5) states that, for particular sequences known as \emph{inhomogeneous random graphs} and under a given sub-criticality condition, $C_1(\mathcal{G'}_{n})=o(n)$ \emph{asymptotically almost surely} (a.a.s.), i.e with probability going to $1$ as $n \rightarrow \infty$. Using Proposition \ref{th:sizecomponent}, we get for our part the following result:
\begin{corollary}\label{th:limsup}
Assume the sequence $\left(\mathcal{H}^n = \left(-\ln(1-p_{ij}^{n})\right)_{ij}\right)_{n \ \in \mathbb{N}}$ is such that
\begin{equation}
\label{eqn:limsupassumption}
\limsup_{n \rightarrow \infty} \rho_c (\mathcal{H}^n) < 1.
\end{equation}
Then, for any $\epsilon>0$, we have asymptotically almost surely when $n \rightarrow \infty$, 
\begin{equation}
C_1(\mathcal{G}'_{n})=o(n^{1/2 + \epsilon}).
\end{equation}
\end{corollary}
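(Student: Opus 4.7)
The plan is to leverage the expectation bound from Corollary \ref{th:simpleSize}, turn it into a probability tail bound via Markov's inequality, and combine this with the uniform limit-superior control on $\rho_c(\mathcal{H}^n)$. Since the graphs $\mathcal{G}_n$ are undirected, each hazard matrix $\mathcal{H}^n$ is symmetric, so $\rho(\mathcal{H}^n) = \rho_c(\mathcal{H}^n)$, which lets one move between the two spectral radii without loss.

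Concretely, I would first unpack the hypothesis $\limsup_{n \to \infty} \rho_c(\mathcal{H}^n) < 1$ to produce an $\eta < 1$ and an integer $N$ such that $\rho(\mathcal{H}^n) = \rho_c(\mathcal{H}^n) \leq \eta$ for all $n \geq N$. Corollary \ref{th:simpleSize} then applies uniformly for large $n$ and yields
\begin{equation}
\Exp{C_1(\mathcal{G}'_n)} \leq \sqrt{\frac{n}{1-\rho(\mathcal{H}^n)}} \leq \sqrt{\frac{n}{1-\eta}}.
\end{equation}
Next, for an arbitrary $\epsilon > 0$, I would apply Markov's inequality to get
\begin{equation}
\Prob{C_1(\mathcal{G}'_n) \geq n^{1/2+\epsilon}} \leq \frac{\Exp{C_1(\mathcal{G}'_n)}}{n^{1/2+\epsilon}} \leq \frac{1}{\sqrt{1-\eta}}\, n^{-\epsilon},
\end{equation}
which tends to $0$ as $n \to \infty$, and more generally the same bound with any fixed $\delta > 0$ in place of $1$ on the left also tends to $0$. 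This is exactly the a.a.s.\ statement that $C_1(\mathcal{G}'_n) = o(n^{1/2+\epsilon})$, completing the argument.

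I do not expect a genuine obstacle here: the result is essentially a quantitative transfer from the expectation bound of Corollary \ref{th:simpleSize} to a high-probability statement, and Markov is strong enough because we only need to rule out $C_1$ exceeding $n^{1/2+\epsilon}$ (as opposed to $\sqrt{n}$ itself), which leaves a polynomial decay margin $n^{-\epsilon}$. The only point that deserves a sentence of care is the identification $\rho = \rho_c$ in the symmetric setting, which justifies invoking a corollary stated in terms of $\rho(\mathcal{H})$ under an assumption phrased in terms of $\rho_c(\mathcal{H}^n)$.
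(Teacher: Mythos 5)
Your proof is correct and follows essentially the same route as the paper's: extract $\eta<1$ from the $\limsup$ hypothesis, apply Corollary \ref{th:simpleSize} to bound $\Exp{C_1(\mathcal{G}'_n)}$ by $\sqrt{n/(1-\eta)}$, and conclude by Markov's inequality. Your explicit remark that $\rho(\mathcal{H}^n)=\rho_c(\mathcal{H}^n)$ in the undirected (symmetric) setting is a small point of care the paper leaves implicit, but it does not change the argument.
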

This result is to our knowledge the first to bound the expected size of the largest connected component in general arbitrary networks.
\section{Application to particular networks}\label{sec:examples}
In order to illustrate our theoretical results, we now apply our bounds to three specific networks and compare them to existing results, showing that our bounds are always of the same order than these specific results. We consider three particular networks: 1)~star-shaped networks, 2)~\Erdos networks and 3)~random graphs with an expected degree distribution. In order to simplify these problems and exploit existing theorems, we will consider in this section that $p_{ij}=p$ is fixed for each edge $(i,j) \in \mathcal{E}$. Infection dynamics thus only depend on $p$, the set of influencers $A$, and the structure of the underlying network.

\subsection{Star-shaped networks}
For a star shaped network centered around a given node $v_1$, and $A = \{v_1\}$, the exact influence is computable and writes $\sigma(\{v_1\}) = 1+p(n-1)$.
As $\HazMat(A)_{ij} = -\ln(1-p) \one_{\{i = 1, j \neq 1\}}$, the spectral radius is given by
\begin{equation}
\rho \left(\frac{\HazMat(A) + \HazMat(A)^\top}{2} \right) = \frac{-\ln(1-p)}{2} \sqrt{n-1}.
\end{equation}
Therefore, Proposition \ref{th:mainResult} states that $\sigma(\{v_1\}) \leq 1+(n-1)\gamma_1$ where $\gamma_1$ is the solution of equation 
\begin{equation}
\label{eqn:starshaped}
1-\gamma_1=\exp{\left(\left(\gamma_1 \sqrt{n-1}+\frac{1}{\gamma_1 \sqrt{n-1}}\right)\frac{\ln(1-p)}{2}\right)}.
\end{equation}
It is worth mentionning that, when $p = \frac{1}{\sqrt{n-1}}$, $\gamma_1 = \frac{1}{\sqrt{n-1}}$ is solution of (\ref{eqn:starshaped}) and therefore the bound is $\sigma(\{v_1\}) \leq 1+\sqrt{n-1}$ which is tight.
Note that, in the case of star-shaped networks, the influence does not present a critical behavior and is always linear with respect to the total number of nodes $n$.

\subsection{\Erdos networks}
For \Erdos networks $\mathcal{G}(n,p)$ ({\em i.e.} an undirected network with $n$ nodes where each couple of nodes $(i,j) \in \mathcal{V}^{2}$ belongs to $\mathcal{E}$ independently of the others with probability $p$), the exact influence of a set of nodes is not known. However, percolation theory characterizes the limit behavior of the giant connected component when $n \rightarrow \infty$. In the simplest case of \Erdos networks $\mathcal{G}(n,\frac{c}{n})$ the following result holds:
\begin{lemma}
(taken from \cite{bollobas2007phase}) For a given sequence of \Erdos networks $\mathcal{G}(n,\frac{c}{n})$, we have:
\begin{itemize}
\item if $c<1$, $C_1(\mathcal{G}(n,\frac{c}{n})) \leq \frac{3}{(1-c)^{2}} \log(n)$ a.a.s.
\item if $c>1$, $C_1(\mathcal{G}(n,\frac{c}{n})) = (1+o(1))\beta n$ a.a.s. where $\beta - 1 + \exp(-\beta c)=0$.
\end{itemize}
\end{lemma}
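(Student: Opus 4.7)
The plan is to prove both parts by coupling the breadth-first component exploration in $\mathcal{G}(n,c/n)$ with a Galton-Watson branching process whose offspring distribution is $\mathrm{Bin}(n-1, c/n)$, which in the limit becomes Poisson with mean $c$. The key observation is that as long as the BFS queue has explored few vertices, each newly revealed neighbour contributes essentially $\mathrm{Bin}(n - O(\log n), c/n)$ new offspring, so the exploration is well approximated by the branching process up to its first $O(n^{2/3})$ steps or so. The survival/extinction dichotomy at $c = 1$ then drives both halves.

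For the subcritical regime $c < 1$, I would show that the size $|C(v)|$ of the component of any fixed vertex $v$ is stochastically dominated by the total progeny $T$ of a Galton-Watson process with $\mathrm{Bin}(n-1,c/n)$ offspring. The standard generating-function / hitting-time computation for subcritical Galton-Watson processes gives an exponential tail $\mathbb{P}(T \geq k) \leq e^{-I(c)\,k}$, where a Chernoff-type calculation produces a rate $I(c) = (1-c)^2/2 + o(1-c)^2$ comparable to $(1-c)^2/3$ after accounting for the finite-$n$ correction. A union bound over the $n$ vertices then yields $\mathbb{P}(C_1 \geq \frac{3}{(1-c)^2}\log n) \leq n \cdot n^{-1-\epsilon} \to 0$, proving the first bullet.

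For the supercritical regime $c > 1$, let $\beta \in (0,1)$ be the survival probability of a Poisson$(c)$ Galton-Watson tree, which indeed satisfies $\beta - 1 + e^{-\beta c} = 0$. I would show the two halves separately. First, using the same BFS coupling one proves that the expected number of vertices whose component exceeds some slowly growing threshold $K(n) \to \infty$ is $(\beta + o(1)) n$, and a second-moment/variance estimate on this count gives concentration. Second, to upgrade ``many vertices in large components'' to ``one giant component of size $(\beta + o(1))n$'', I would use the classical sprinkling argument: split the edge probability as $p = p_1 + p_2$ with $p_2 = \epsilon/n$, run the exploration with $p_1 = (c-\epsilon)/n$ to get $\Theta(n)$ components each of size $\geq K(n)$, and observe that the $p_2$-sprinkled edges connect any two such components with probability $1 - (1 - \epsilon/n)^{K(n)^2} \to 1$, forcing them to merge into a unique giant component.

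The main obstacle is the uniqueness step in the supercritical case: the tail bound and branching-process comparison are essentially routine once the coupling is set up, but proving that the many candidate ``large'' components coalesce into a single one of size $(1+o(1))\beta n$ requires the sprinkling trick together with a careful choice of $K(n)$ and $\epsilon$, and then a final pass to show that no further vertices outside the giant lie in components of linear size. The constant $\beta$ being the survival probability (hence the unique positive root of $\beta = 1 - e^{-\beta c}$) follows because a vertex lies in the giant component asymptotically iff its branching-process exploration survives.
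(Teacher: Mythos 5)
The paper never proves this lemma: it is imported verbatim from \cite{bollobas2007phase}, where it appears as the homogeneous (rank-one) special case of the general theory of inhomogeneous random graphs, so there is no in-paper argument to match yours against. Your proposal is a correct, self-contained outline of the classical direct proof instead: stochastic domination of the BFS component exploration by a Galton--Watson process with $\mathrm{Bin}(n-1,c/n)$ offspring; an exponential tail $\Prob{T\ge k}\le e^{-kI_c}$ for the subcritical total progeny with rate $I_c=c-1-\ln c\ge (1-c)^2/2$, which comfortably exceeds the $(1-c)^2/3$ needed for the union bound over $n$ vertices, giving the first bullet; and, for $c>1$, a first/second-moment count of vertices lying in components of size at least $K(n)$ (which is $(\beta+o(1))n$ with $\beta$ the Poisson$(c)$ survival probability, i.e.\ the positive root of $\beta=1-e^{-c\beta}$, exactly the equation in the statement) followed by sprinkling to merge them. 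One caution on the step you yourself flag as the main obstacle: a ``slowly growing'' $K(n)$ is not enough. After sprinkling with $p_2=\epsilon/n$ you must union-bound over the at most $2^{n/K(n)}$ ways of splitting the large components into two groups, and the failure probability per split is roughly $e^{-\epsilon(\beta-\delta)K(n)}$, so you need $K(n)\gg\sqrt{n/\epsilon}$ (the standard choice $K(n)=n^{2/3}$ works, and the branching-process coupling survives at that scale because $n^{2/3}=o(n)$). With that fixed, the argument is sound. What your route buys is elementarity and independence from the heavy kernel machinery of the cited reference; what the citation buys the authors is that the same framework handles the genuinely inhomogeneous edge probabilities they compare against elsewhere in the paper.
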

As previously stated, our results hold for any given graph, and not only asymptotically. However, we get an asymptotic behavior consistent with the aforementioned result. Indeed, using notations of section \ref{sec:perco}, $\HazMat^{n}_{ij} = -\ln(1-\frac{c}{n}) \one_{\{i \neq j\}}$ and $\rho (\HazMat^{n}) = -(n-1) \ln(1-\frac{c}{n})$. Using Proposition \ref{th:sizecomponent}, and noting that $\gamma_3=(1+o(1))\beta$, we get that, for any $\epsilon>0$:
\begin{itemize}
\item if $c < 1$, $C_1(\mathcal{G}(n,\frac{c}{n})) = o(n^{1/2+\epsilon})$ a.a.s.
\item if $c > 1$, $C_1(\mathcal{G}(n,\frac{c}{n})) \leq (1+o(1))\beta n^{1+\epsilon}$ a.a.s., where $\beta - 1 + \exp(-\beta c)=0$.
\end{itemize}

\subsection{Random graphs with given expected degree distribution}
In this section, we apply our bounds to random graphs whose expected degree distribution is fixed (see e.g \cite{Newman:2010:NI}, section 13.2.2). More specifically, let $w = (w_i)_{i\in\{1,\dots,n\}}$ be the expected degree of each node of the network. For a fixed $w$, let $G(w)$ be a random graph whose edges are selected independently and randomly with probability
\begin{equation}
\label{eqn:expecteddegree}
q_{ij} = \frac{\one_{\{i \neq j\}} w_i w_j}{\sum_k w_k}.
\end{equation}
For these graphs, results on the \emph{volume} of connected components (i.e the expected sum of degrees of the nodes in these components)  were derived in \cite{chung2002connected} but our work gives to our knowledge the first result on the size of the giant component. Note that \Erdos $\mathcal{G}(n,p)$ networks are a special case of (\ref{eqn:expecteddegree}) where $w_i=np$ for any $i \in \mathcal{V}$.

In order to further compare our results, we note that these graphs are also very similar to the widely used \emph{configuration model} where node degrees are fixed to a sequence $w$, the main difference being that the occupation probabilities $p_{ij}$ are in this case not independent anymore. For configuration models, a giant component exists if and only if $\sum_i{w_i^{2}}> 2 \sum_i{w_i}$ (\cite{molloy1995critical,molloy1998size}). 
In the case of graphs with given expected degree distribution, we retrieve the key role played by the ratio $\frac{\sum_i w_i^2}{\sum_i w_i}$ in our criterion of non-existence of the giant component given by $\rho \left(\frac{\HazMat + \HazMat^\top}{2} \right)<1$ where
\begin{equation}
\rho \left(\frac{\HazMat + \HazMat^\top}{2} \right) \approx \rho((q_{ij})_{ij}) \leq \frac{\sum_i w_i^2}{\sum_i w_i}.
\end{equation}
The left-hand approximation is based on $-\ln(1-x) \approx x$ and is particularly good when the $q_{ij}$ are small. This is for instance the case as soon as there exists $\alpha<1$ such that, for any $i \in \mathcal{V}$, $w_i=o(n^{\alpha})$. The right-hand side is based on the fact that the spectral radius of the matrix $\left(q_{ij} + \one_{\{i = j\}}\frac{w_i^{2}}{\sum_k w_k}\right)_{ij}$ is given by $\frac{\sum_i w_i^2}{\sum_i w_i}$.

\section{Experimental results}\label{sec:exps}

In this section, we show that the bounds given in \Sec{sec:bounds} are tight (\ie very close to empirical results in particular graphs), and are good approximations of the influence on a large set of random networks.

\Fig{fig:worstCaseBoundTight} compares experimental simulations of the influence to the bound derived in proposition \ref{th:mainResult}. The considered networks have $n=1000$ nodes and are of 6 types (see e.g \cite{Newman:2010:NI} for further details on these different networks): 1)~\Erdos networks, 2)~Preferential attachment networks, 3)~Small-world networks, 4)~Geometric random networks (\cite{penrose2003random}), 5)~2D regular grids and 6)~totally connected networks with fixed weight $b\in[0,1]$ except for the ingoing and outgoing edges of the influencer node $A = \{v_1\}$ having weight $a\in[0,1]$. The results show that the bound in proposition \ref{th:mainResult} is tight (see totally connected networks in \Fig{fig:worstCaseBoundTight}) and close to the real influence for a large class of random networks. In particular, the tightness of the bound around $\rho_c(A) = 1$ validates the behavior in $\sqrt{n}$ of the worst-case influence in the sub-critical regime.
\begin{figure}[h]
	\centering
	\begin{minipage}{.45\textwidth}
		\centering
		\includegraphics[viewport=10 3 384 295, clip=true, width=1\linewidth]{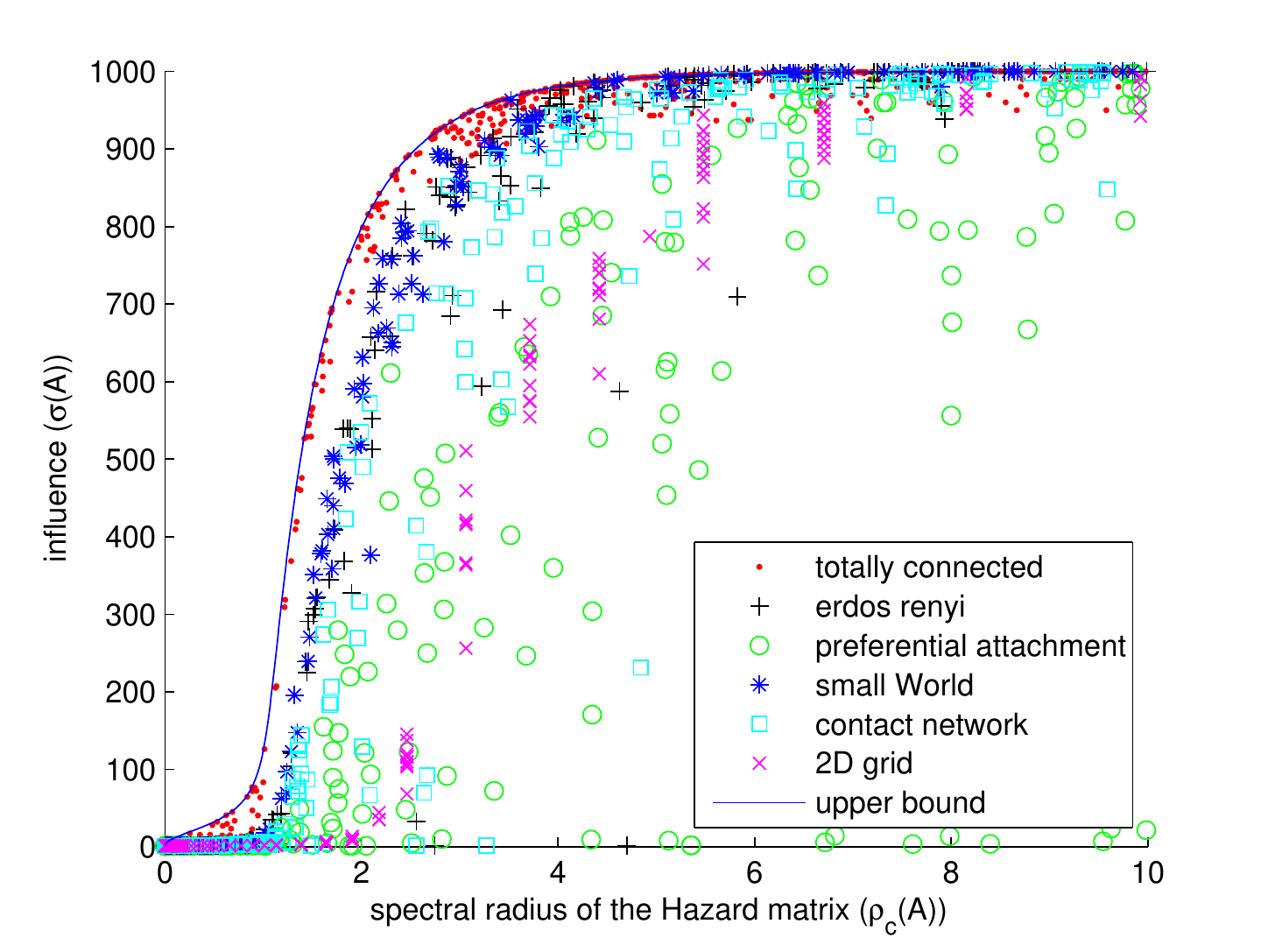}
		\caption{Empirical influence of a fixed set of influencers on random networks of various types. The solid line is the upper bound in proposition \ref{th:mainResult}.}
		\label{fig:worstCaseBoundTight}
	\end{minipage}%
	\hspace{1em}
	\begin{minipage}{.45\textwidth}
		\centering
		\includegraphics[viewport=10 3 384 295, clip=true, width=1\linewidth]{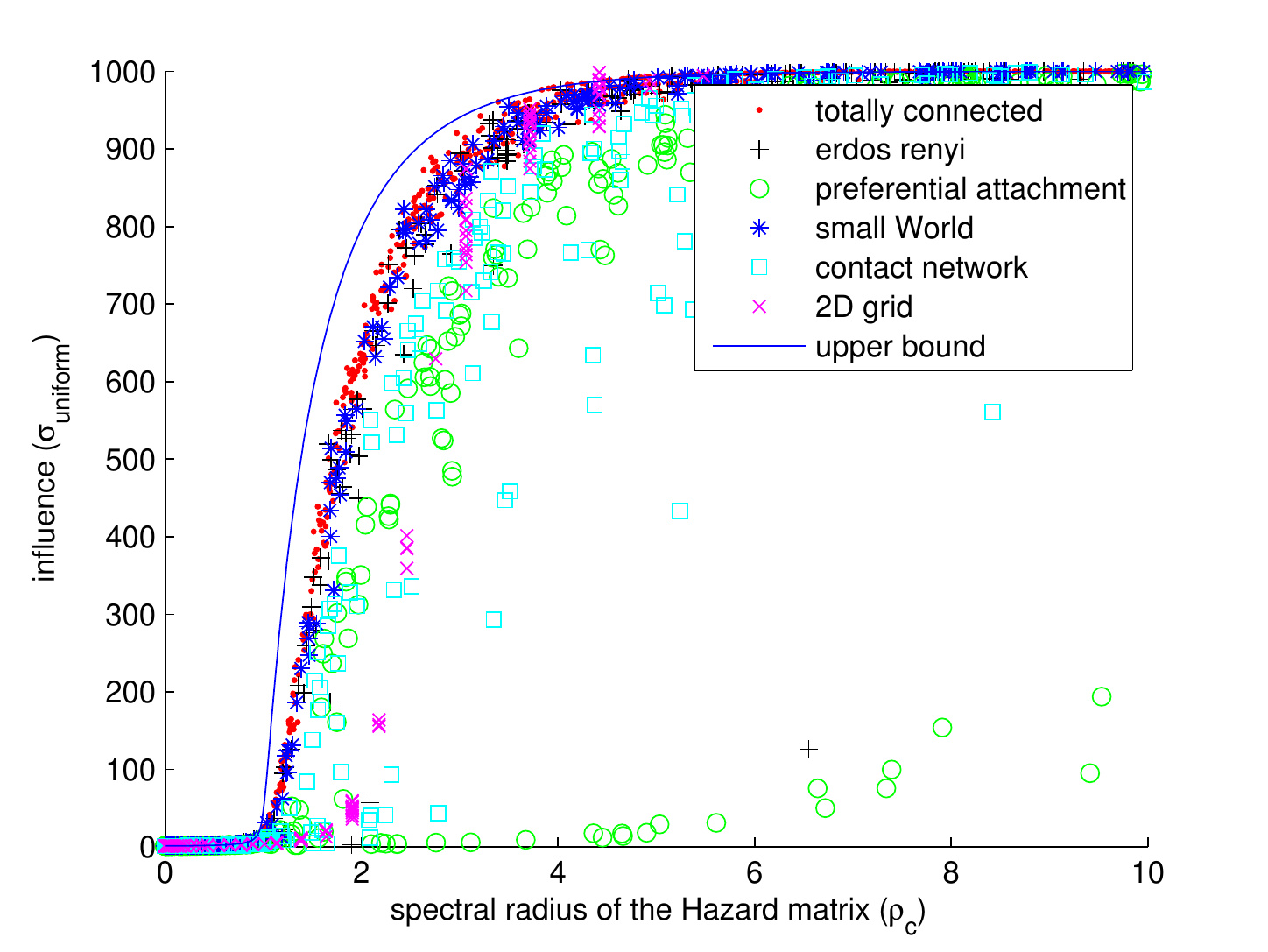}
		\caption{Empirical influence of a uniformly distributed set of influencers on random networks of various types. The solid line is the upper bound in proposition \ref{th:uniformResult}.}
		\label{fig:averageBoundTight}
	\end{minipage}
\end{figure}
\begin{figure}[h]
	\centering
	\begin{subfigure}[b]{.45\textwidth}
		\centering
		\includegraphics[viewport=10 3 384 295, clip=true, width=1\linewidth]{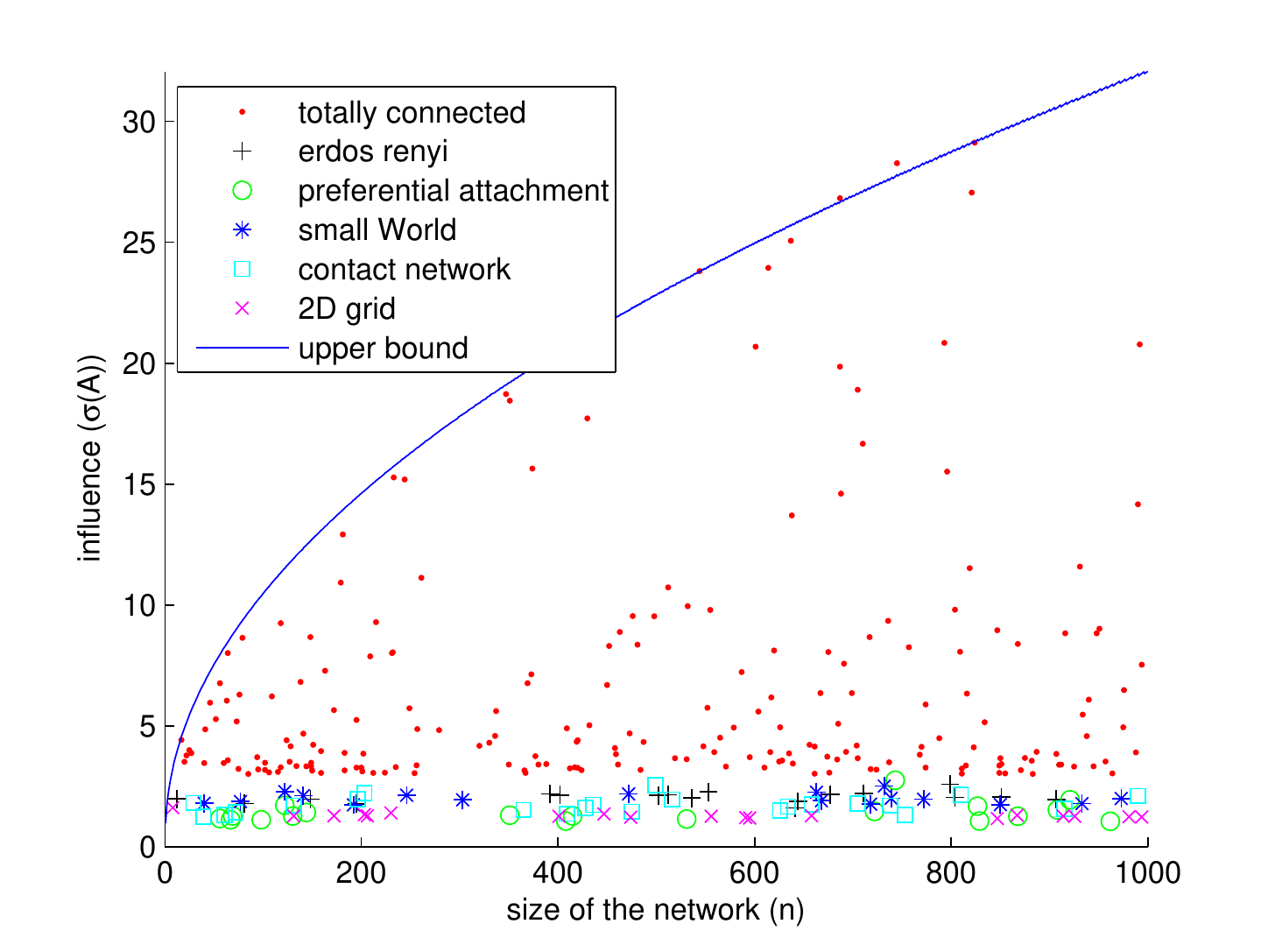}
		\caption{Sub-critical regime: $\rho_c(A) = 0.5$}
		\label{fig:rhoFixed05}
	\end{subfigure}%
	\begin{subfigure}[b]{.45\textwidth}
		\centering
		\includegraphics[viewport=10 3 384 295, clip=true, width=1\linewidth]{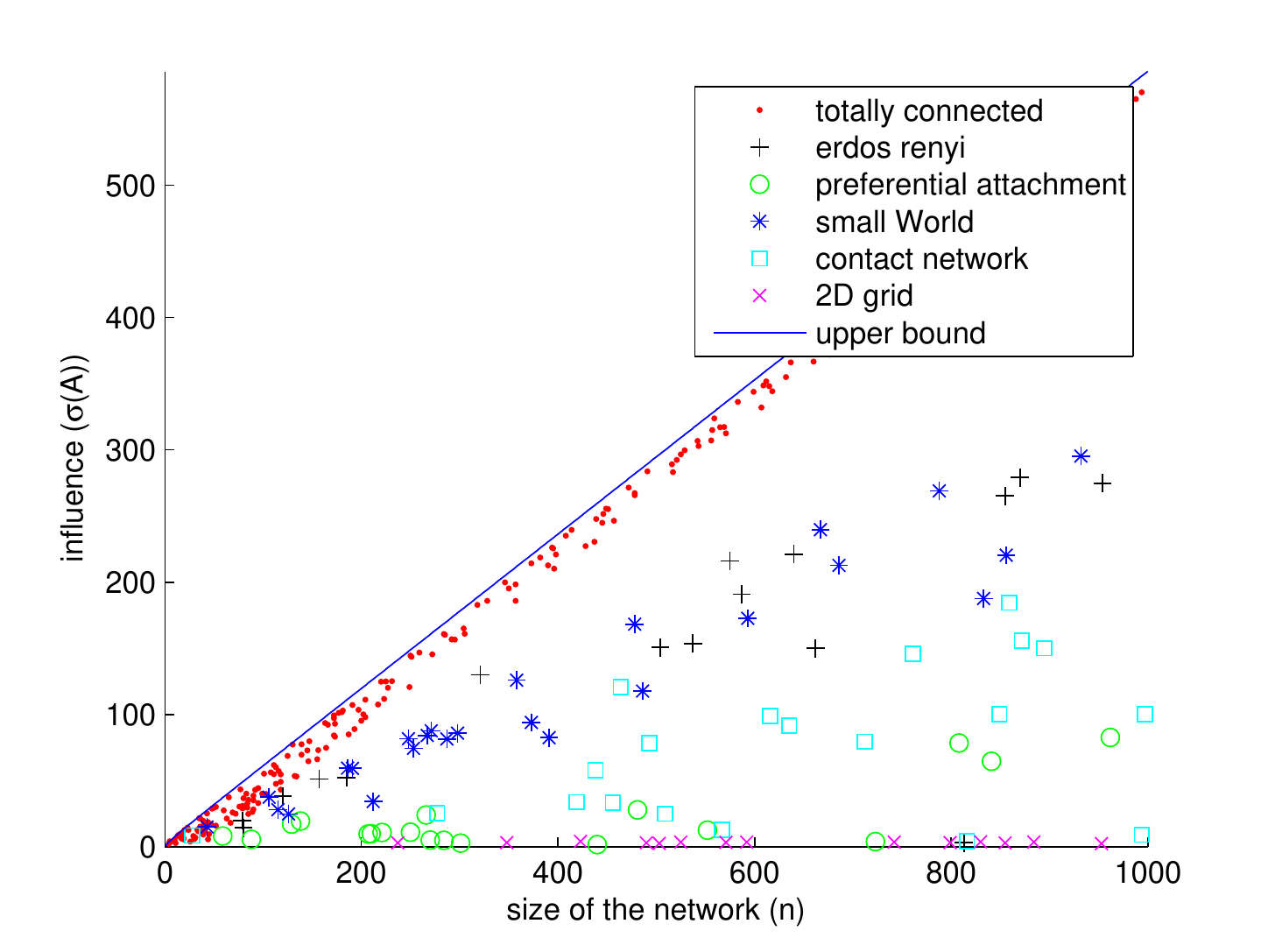}
		\caption{Super-critical regime: $\rho_c(A) = 1.5$}
		\label{fig:rhoFixed15}
	\end{subfigure}
	\caption{Influence w.r.t. the size of the network in the sub-critical ($\rho_c(A) < 1$) and super-critical regime ($\rho_c(A) > 1$). The solid line is the upper bound in proposition \ref{th:mainResult}. Note the square-root versus linear behavior of the influence.}
	\label{fig:rhoFixed}
\end{figure}

Similarly, \Fig{fig:averageBoundTight} compares experimental simulations of the influence to the bound derived in proposition \ref{th:uniformResult} in the case of random initial influencers. While this bound is not as tight as the previous one, the behavior of the bound agrees with experimental simulations, and proves a relatively good approximation of the influence under a random set of initial influencers. It is worth mentioning that the bound is tight for the sub-critical regime and shows that corollary \ref{th:uniformsimpleBounds} is a good approximation of $\sigma_{\mbox{uniform}}$ when $\rho_c < 1$.

In order to verify the criticality of $\rho_c(A) = 1$, we compared the behavior of $\sigma(A)$ w.r.t the size of the network $n$. When $\rho_c(A) < 1$ (see \Fig{fig:rhoFixed05} in which $\rho_c(A) = 0.5$), $\sigma(A) = O(\sqrt{n})$, and the bound is tight. On the contrary, when $\rho_c(A) > 1$ (see \Fig{fig:rhoFixed15} in which $\rho_c(A) = 1.5$), $\sigma(A) = O(n)$, and $\sigma(A)$ is linear w.r.t. $n$ for most random networks.

\section{Conclusion}

In this paper, we derived the first upper bounds for the influence of a given set of nodes in any finite graph under the Independent Cascade Models (ICM) framework, and relate them to the spectral radius of a given \emph{hazard matrix}. We show that these bounds can also be used to generalize previous results in the fields of epidemiology and percolation. Finally, we provide empirical evidence that these bounds are close the best possible for general graphs.


\bibliographystyle{abbrv}
\bibliography{InfluenceBounds_1}

\begin{thebibliography}{10}

\bibitem{kirby2006connected}
Justin Kirby and Paul Marsden.
\newblock {\em Connected marketing: the viral, buzz and word of mouth
  revolution}.
\newblock Elsevier, 2006.

\bibitem{domingos2001mining}
Pedro Domingos and Matt Richardson.
\newblock Mining the network value of customers.
\newblock In {\em Proceedings of the seventh ACM SIGKDD international
  conference on Knowledge discovery and data mining}, pages 57--66. ACM, 2001.

\bibitem{Kempe:2003:MSI:956750.956769}
David Kempe, Jon Kleinberg, and \'{E}va Tardos.
\newblock Maximizing the spread of influence through a social network.
\newblock In {\em Proceedings of the Ninth ACM SIGKDD International Conference
  on Knowledge Discovery and Data Mining}, KDD '03, pages 137--146, New York,
  NY, USA, 2003. ACM.

\bibitem{chen2009efficient}
Wei Chen, Yajun Wang, and Siyu Yang.
\newblock Efficient influence maximization in social networks.
\newblock In {\em Proceedings of the 15th ACM SIGKDD international conference
  on Knowledge discovery and data mining}, pages 199--208. ACM, 2009.

\bibitem{chen2010scalable}
Wei Chen, Chi Wang, and Yajun Wang.
\newblock Scalable influence maximization for prevalent viral marketing in
  large-scale social networks.
\newblock In {\em Proceedings of the 16th ACM SIGKDD international conference
  on Knowledge discovery and data mining}, pages 1029--1038. ACM, 2010.

\bibitem{goyal2011celf++}
Amit Goyal, Wei Lu, and Laks~VS Lakshmanan.
\newblock Celf++: optimizing the greedy algorithm for influence maximization in
  social networks.
\newblock In {\em Proceedings of the 20th international conference companion on
  World wide web}, pages 47--48. ACM, 2011.

\bibitem{ohara2013predictive}
Kouzou Ohara, Kazumi Saito, Masahiro Kimura, and Hiroshi Motoda.
\newblock Predictive simulation framework of stochastic diffusion model for
  identifying top-k influential nodes.
\newblock In {\em Asian Conference on Machine Learning}, pages 149--164, 2013.

\bibitem{gomez2010inferring}
Manuel Gomez~Rodriguez, Jure Leskovec, and Andreas Krause.
\newblock Inferring networks of diffusion and influence.
\newblock In {\em Proceedings of the 16th ACM SIGKDD international conference
  on Knowledge discovery and data mining}, pages 1019--1028. ACM, 2010.

\bibitem{myers2010convexity}
Seth~A. Myers and Jure Leskovec.
\newblock On the convexity of latent social network inference.
\newblock In {\em NIPS}, pages 1741--1749, 2010.

\bibitem{DBLP:conf/icml/Gomez-RodriguezBS11}
Manuel Gomez-Rodriguez, David Balduzzi, and Bernhard Sch{\"o}lkopf.
\newblock Uncovering the temporal dynamics of diffusion networks.
\newblock In {\em ICML}, pages 561--568, 2011.

\bibitem{rodriguez2012influence}
Manuel~G Rodriguez and Bernhard Sch{\"o}lkopf.
\newblock Influence maximization in continuous time diffusion networks.
\newblock In {\em Proceedings of the 29th International Conference on Machine
  Learning (ICML-12)}, pages 313--320, 2012.

\bibitem{DBLP:conf/nips/DuSGZ13}
Nan Du, Le~Song, Manuel Gomez-Rodriguez, and Hongyuan Zha.
\newblock Scalable influence estimation in continuous-time diffusion networks.
\newblock In {\em NIPS}, pages 3147--3155, 2013.

\bibitem{newman2002spread}
Mark~EJ Newman.
\newblock Spread of epidemic disease on networks.
\newblock {\em Physical review E}, 66(1):016128, 2002.

\bibitem{kermack1932contributions}
William~O Kermack and Anderson~G McKendrick.
\newblock Contributions to the mathematical theory of epidemics. ii. the
  problem of endemicity.
\newblock {\em Proceedings of the Royal society of London. Series A},
  138(834):55--83, 1932.

\bibitem{draief2006thresholds}
Moez Draief, Ayalvadi Ganesh, and Laurent Massouli{\'e}.
\newblock Thresholds for virus spread on networks.
\newblock In {\em Proceedings of the 1st international conference on
  Performance evaluation methodolgies and tools}, page~51. ACM, 2006.

\bibitem{bollobas2007phase}
B{\'e}la Bollob{\'a}s, Svante Janson, and Oliver Riordan.
\newblock The phase transition in inhomogeneous random graphs.
\newblock {\em Random Structures \& Algorithms}, 31(1):3--122, 2007.

\bibitem{nelson2007epidemiology}
Kenrad~E Nelson.
\newblock Epidemiology of infectious disease: general principles.
\newblock {\em Infectious Disease Epidemiology Theory and Practice.
  Gaithersburg, MD: Aspen Publishers}, pages 17--48, 2007.

\bibitem{janson2011random}
Svante Janson, Tomasz Luczak, and Andrzej Rucinski.
\newblock {\em Random graphs}, volume~45.
\newblock John Wiley \& Sons, 2011.

\bibitem{Newman:2010:NI}
Mark Newman.
\newblock {\em Networks: An Introduction}.
\newblock Oxford University Press, Inc., New York, NY, USA, 2010.

\bibitem{chung2002connected}
Fan Chung and Linyuan Lu.
\newblock Connected components in random graphs with given expected degree
  sequences.
\newblock {\em Annals of combinatorics}, 6(2):125--145, 2002.

\bibitem{molloy1995critical}
Michael Molloy and Bruce Reed.
\newblock A critical point for random graphs with a given degree sequence.
\newblock {\em Random structures \& algorithms}, 6(2-3):161--180, 1995.

\bibitem{molloy1998size}
Michael Molloy and Bruce Reed.
\newblock The size of the giant component of a random graph with a given degree
  sequence.
\newblock {\em Combinatorics probability and computing}, 7(3):295--305, 1998.

\bibitem{penrose2003random}
Mathew Penrose.
\newblock {\em Random geometric graphs}, volume~5.
\newblock Oxford University Press Oxford, 2003.

\bibitem{fortuin1971correlation}
Cees~M Fortuin, Pieter~W Kasteleyn, and Jean Ginibre.
\newblock Correlation inequalities on some partially ordered sets.
\newblock {\em Communications in Mathematical Physics}, 22(2):89--103, 1971.

\bibitem{meyer2000matrix}
Carl~D Meyer.
\newblock {\em Matrix analysis and applied linear algebra}, volume~2.
\newblock Siam, 2000.

\end{thebibliography}

\newpage
%
%
\section*{APPENDIX}
\subsection*{Mathematical arguments}
\subsection*{Proof of Lemma \ref{th:equivalence}}

We prove here the equivalence of propagation dynamics $DTIC(\mathcal{P}),CTIC(\mathcal{F,\infty})$ and $RN(\mathcal{P})$, provided that for any $(i,j) \in \mathcal{E}$, $\int_0^\infty f_{ij}(t) dt= \HazMat_{ij}$. More specifically, we prove the following lemma, that will be useful in the subsequent proofs. In the following, we will denote by $X_i$ the state of node $i$ at the end of the infection process, i.e $X_i=1$ if infection has reached node $i$, and $X_i=0$ otherwise. 

\begin{lemma}\label{th:exactXiInf}
Let $\mathcal{G} = (\mathcal{V}, \mathcal{E})$ be a given directed network and $A \subset \mathcal{V}$ a set of influencers. Then, under the infection processes $DTIC(\mathcal{P}),CTIC(\mathcal{F,\infty})$ and $RN(\mathcal{P})$, we have $\forall i \notin A$,
\begin{equation}
X_i = 1 - \prod_j (1 - X_j E_{ji})
\end{equation}
where the $((E_{ij})_{ij})$ are independant Bernoulli random variables $E_{ij} \sim \mathcal{B}(p_{ij})$ for infection processes $DTIC(\mathcal{P})$ and $RN(\mathcal{P})$, and $E_{ij} \sim \mathcal{B} \big(1-\exp(-\int_{0}^{\infty} f_{ij}(t)dt) \big)$ for infection process $CTIC(\mathcal{F,\infty})$.
\end{lemma}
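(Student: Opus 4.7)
My plan is to couple all three infection processes to a single family of independent edge Bernoulli variables and then read the recursion off as a reachability statement. Concretely, introduce independent $E_{ij}\sim\mathcal{B}(p_{ij})$ for every $(i,j)\in\mathcal{E}$, with $p_{ij} = 1 - \exp(-\HazMat_{ij})$, and let $\mathcal{E}^\star = \{(i,j) : E_{ij} = 1\}$ be the random ``open'' subgraph. I will verify under each coupling that $X_i$ equals the indicator that $i$ is reachable from $A$ in $\mathcal{E}^\star$. Granting this, for $i \notin A$ we have $X_i = 1$ iff some in-neighbor $j$ satisfies both $E_{ji} = 1$ and $X_j = 1$, which is exactly $X_i = 1 - \prod_j (1 - X_j E_{ji})$.

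For $RN(\mathcal{P})$ the statement is essentially the definition: set $E_{ij} = \one_{\{(i,j)\in\mathcal{E}'\}}$ and note that the $E_{ij}$ are by construction independent $\mathcal{B}(p_{ij})$, while $X_i$ is defined as the reachability indicator. For $DTIC(\mathcal{P})$, I pre-sample independent $E_{ij} \sim \mathcal{B}(p_{ij})$ in advance and declare that each newly infected node $i$ transmits to $j$ at the following time step iff $E_{ij} = 1$. Because every edge is consulted by the dynamics at most once (in the single round in which $i$ first transmits), revealing the $E_{ij}$ up front preserves the joint law of the process. The ever-infected set then coincides with the forward-reachable set of $A$ in $\mathcal{E}^\star$, and the identity follows.

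The only process requiring a genuine argument is $CTIC(\mathcal{F},\infty)$. For each $(i,j)\in\mathcal{E}$, I pre-sample an independent random time $\tau_{ij}\in[0,\infty]$ with hazard function $f_{ij}$, so that $\Prob{\tau_{ij} < \infty} = 1 - \exp(-\int_0^\infty f_{ij}(t)\,dt) = 1 - \exp(-\HazMat_{ij}) = p_{ij}$. If $i$ is infected at finite time $t_i$, its transmission attempt on $(i,j)$ occurs at $t_i + \tau_{ij}$, which is finite iff $\tau_{ij} < \infty$; since $T = \infty$ the attempt then always succeeds in reaching $j$. Setting $E_{ij} = \one_{\{\tau_{ij} < \infty\}}$ yields independent $\mathcal{B}(p_{ij})$ variables matching the lemma's specification, and $X_i$ again coincides with the reachability indicator from $A$ in $\mathcal{E}^\star$.

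The main obstacle is justifying the advance-sampling coupling in the continuous-time case: one must argue that drawing $\tau_{ij}$ up front is distributionally equivalent to drawing it at the random epoch at which $i$ becomes infected. This is legitimate because the description ``stochastic rate of occurrence $f_{ij}(s-t)$'' makes the waiting time $s - t$ depend only on $f_{ij}$ and not on the random time $t$ at which $i$ gets infected, so conditional on $\{i\text{ infected at }t_i\}$ the attempt time $t_i + \tau_{ij}$ has the correct law regardless of when $\tau_{ij}$ was actually sampled. Once the three couplings are in place, the recursion $X_i = 1 - \prod_j (1 - X_j E_{ji})$ is a direct reformulation of graph reachability, and in particular is a valid identity for the specific $(X_i)$ produced by each dynamics (so no fixed-point uniqueness issue needs to be addressed).
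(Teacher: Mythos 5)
Your proposal is correct and follows essentially the same route as the paper: pre-sample independent edge variables ($E_{ij}\sim\mathcal{B}(p_{ij})$ for $DTIC$/$RN$, and $E_{ij}=\one_{\{\tau_{ij}<\infty\}}$ with $\tau_{ij}$ having hazard $f_{ij}$ for $CTIC$), observe that each edge is consulted at most once so the coupling preserves the law of the dynamics, and read the identity off from the fact that a node is infected iff some in-neighbor is infected and the corresponding edge is open. Your explicit remarks on the legitimacy of advance sampling in the continuous-time case and on the absence of a fixed-point uniqueness issue are slightly more careful than the paper's write-up, but the argument is the same.
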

\begin{proof}
First, note that, for $RN(\mathcal{P})$, the random variables $1_{\{(i,j) \in \mathcal{E}'\}}$ and,for $DTIC(\mathcal{P})$, the indicator function of the events that node $i$ succeeds in infecting node $j$ if $i$ is infected during the process and $j$ is still healthy at that time are independant Bernoulli variables $E_{ij} \sim \mathcal{B}(p_{ij})$ and can all be drawn at $t=0$. Moreover, by definition of the infection processes, a node is infected if and only if one of its neighbors is infected, and the respective ingoing edge transmitted the contagion. We thus have for $DTIC(\mathcal{P})$ and $RN(\mathcal{P})$:
\begin{equation}
X_i = 0 \Leftrightarrow \forall j\in\{1, \dots, n\}, X_j = 0 \mbox{ or } E_{ji} = 0,
\end{equation}
which implies that
\begin{equation}
1 - X_i = \prod_j (1 - X_j E_{ji}).
\end{equation}
For $CTIC(\mathcal{F,\infty})$, the variables drawn at the beginning of the infection process are the (possibly infinite) times $\tau_{ij}$ such that node $i$ will infect node $j$ at time $t_i+\tau_{ij}$ if node $i$ has been infected at time $t_i$, and node $j$ has not been infected by another node before time $t_i+\tau_{ij}$. By definition, these independent random variables have the following survival function:
\begin{equation}
P(\tau_{ij} < t)=1- \exp \left(-\int_{0}^{t} f_{ij}(s) ds \right)
\end{equation}
Therefore, we have by the same arguments than previously,
\begin{equation}
1 - X_i = \prod_j (1 - X_j 1_{\{\tau_{ij}<\infty\}}).
\end{equation}
which proves the result for $CTIC(\mathcal{F,\infty})$, defining  $E_{ij}=1_{\{\tau_{ij}<\infty\}}$
\end{proof}

Lemma \ref{th:equivalence}
is then a direct corollary of Lemma \ref{th:exactXiInf} in the case where, for any $(i,j) \in \mathcal{E}$, $\int_0^\infty f_{ij}(t) dt= \HazMat_{ij}$.

\subsection*{Proofs of Proposition \ref{th:mainResult} and Corollary \ref{th:simpleBounds}}

We develop here the full proofs for Proposition \ref{th:mainResult}
and Corollary \ref{th:simpleBounds}
that apply to any set of initially infected nodes. We will first need to prove two useful results: Lemma \ref{th:poscorrelated}, that proves for $j \in \mathcal{V}$ a positive correlation between the events 'node $j$ did not infect node $i$ during the epidemic' and Lemma \ref{th:mainLemma}, that bound the probability that a given node gets infected during the infection process.

\begin{lemma}\label{th:poscorrelated}
$\forall i \notin A$, $\{1 - X_j E_{ji}\}_{j\in \mathcal{V}}$ are positively correlated.
\end{lemma}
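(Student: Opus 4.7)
The plan is to recognize this as a clean application of Harris's inequality (the FKG inequality for product measures of independent Bernoullis). First, I would work in the $RN(\mathcal{P})$ representation guaranteed by Lemma \ref{th:equivalence}: each edge $(j,k)\in\mathcal{E}$ is retained independently with probability $p_{jk}$, the Bernoulli $E_{jk}$ records whether it is kept, and $X_j$ is simply the indicator that $j$ is reachable from the influencer set $A$ in the resulting random subgraph. In this picture, $X_j$ is manifestly coordinate-wise non-decreasing as a function of the vector $(E_{jk})_{(j,k)\in\mathcal{E}}$ of independent Bernoullis, because flipping any additional edge from $0$ to $1$ can only create new reachability, never destroy it.

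Next, for each $j\in\mathcal{V}$ the product $X_j E_{ji}$ is a product of two non-negative, non-decreasing functions on the product space $\{0,1\}^{|\mathcal{E}|}$, so it is itself non-decreasing, and consequently
\begin{equation}
F_j := 1 - X_j E_{ji}
\end{equation}
is non-increasing in every coordinate, for every $j$. Note that $X_j$ may well depend on $E_{ji}$ itself (for instance via a cycle in which $i$ is infected and transmits back to $j$), but this plays no role: what matters is the coordinate-wise monotonicity of each $F_j$ on the entire product space.

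Once these two monotonicity observations are in place, the conclusion is immediate from Harris's inequality applied to the independent Bernoulli product measure: any finite family of non-increasing functions is positively associated, in particular
\begin{equation}
\mathbb{E}\!\left[\prod_{j\in\mathcal{V}}(1-X_j E_{ji})\right] \;\geq\; \prod_{j\in\mathcal{V}}\mathbb{E}\!\left[1-X_j E_{ji}\right],
\end{equation}
which is the positive correlation statement the subsequent bounds will need. The main obstacle is really just a conceptual one, namely choosing the right probability space on which to express the infection outcomes as monotone functions of independent randomness; the $RN(\mathcal{P})$ formulation does this for us, and after that no computation remains. If a self-contained proof of Harris's inequality is desired rather than a citation, I would include the standard two-line induction on the number of independent coordinates, which does not interact with the graph structure at all.
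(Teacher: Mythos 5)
Your proof is correct and takes essentially the same route as the paper's: both arguments establish that each $1 - X_j E_{ji}$ is a coordinate-wise non-increasing function of the independent Bernoulli edge variables (the paper via the explicit path representation $X_j = 1 - \prod_{q}(1-\prod_{(k,l)\in q}E_{kl})$, you via the equivalent reachability-monotonicity observation in the $RN(\mathcal{P})$ picture) and then invoke the FKG/Harris inequality for the product measure. No substantive difference.
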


\begin{proof}
We will make use of the FKG inequality (\cite{fortuin1971correlation}):
\begin{lemma}(FKG inequality)\label{th:FKG}
Let $L$ be a finite distributive lattice, and $\mu$ a nonnegative function on $L$, such that, for any $(x,y) \in L^{2}$, 
\begin{equation}
\mu(x \vee y)\mu(x \wedge y) \leq \mu(x)\mu(y)
\end{equation}
Then, for any non-decreasing function $f$ and $g$ on $L$
\begin{equation}
\left(\sum_{x \in L} f(x)g(x)\right) \left(\sum_{x \in L}\mu(x)\right) \geq \left(\sum_{x \in L} f(x)\mu(x)\right) \left(\sum_{x \in L} g(x)\mu(x)\right)
\end{equation}
\end{lemma}
For a given set of influencers $A$, the $X_j$ are deterministic functions of the independent random variables $(E_{ij})_{ij}$. Thus, let $f_{ij}(\{E_{i'j'}\}_{(i',j')}) = 1 - X_jE_{ji}$. In order to apply the FKG inequality, we first need to show that each $f_{ij}: \{0, 1\}^{n^2} \rightarrow \{0, 1\}$ is decreasing with respect to the natural partial order on $\{0, 1\}^{n^2}$ (\ie $X \leq Y$ if $X_i \leq Y_i$ for all $i$). Let $u\in\{0, 1\}^{n^2}$ be a given transmission state of the edges of the network. In order to prove the decreasing behavior of $f_{ij}$, it is sufficient to show that $f_{ij}(u)$ is decreasing with respect to every $u_{(i,j)}$.

In order to prove this, we note that a node $i \in \mathcal{V}$ is reached by the contagion if and only if there exists a path from $A$ to $i$, such that each of its edges transmitted the contagion. This implies the following alternative expression for $X_i$:
\begin{equation}
X_i = 1 - \prod_{q\in\mathcal{Q}_i}(1 - \prod_{(j,l)\in q}E_{jl}).
\end{equation}
where $\mathcal{Q}_i$ is the collection of directed paths (without loops) in $\mathcal{G}$ from the source nodes to node $i$.

From this equation, it is obvious that $X_i(u) = 1 - \prod_{q\in\mathcal{Q}_i}(1 - \prod_{(j,l)\in q}u_{(j,l)})$ is increasing with respect to every $u_{(i,j)}$. This implies that $f_{ij}(u) = 1 - X_j(u) u_{(j,i)}$ is decreasing with respect to every $u_{(i,j)}$ and that $f_{ij}: \{0, 1\}^{n^2} \rightarrow \{0, 1\}$ is decreasing with respect to the natural partial order on $\{0, 1\}^{n^2}$.

Finally, since we consider a product measure (due to the independence of the $E_{ij}$) on a product space, we can apply the FKG inequality to $\{1 - X_j E_{ji}\}_{j\in \{1,\dots,N\}}$, and these random variables are positively correlated.
\end{proof}

The next lemma ensures that the variables $X_i$ satisfy an implicit inequation that will be the starting point of the proof of Proposition \ref{th:mainResult}.

\begin{lemma}\label{th:mainLemma}

For any $A$ such that $|A|=n_0 < n$ and for any $i \notin A$, the probability $\Exp{X_i}$  that node $i$ will be reached by the contagion originating from $A$ verifies: 

\begin{equation}
\Exp{X_i} \leq 1-\exp \bigg(- \sum_j \HazMat_{ji} \Exp{X_j} \bigg)
\end{equation}
\end{lemma}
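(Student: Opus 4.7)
The plan is to start from the exact representation in Lemma \ref{th:exactXiInf}, namely $X_i = 1 - \prod_j (1 - X_j E_{ji})$, so that
\begin{equation}
\Exp{X_i} = 1 - \Exp{\prod_j (1 - X_j E_{ji})}.
\end{equation}
The goal is then to bound the product on the right from below, eventually turning it into an exponential in $\HazMat$.

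First I would invoke Lemma \ref{th:poscorrelated}, which asserts that the family $\{1 - X_j E_{ji}\}_{j \in \mathcal{V}}$ is positively correlated. Applying this inequality inductively yields
\begin{equation}
\Exp{\prod_j (1 - X_j E_{ji})} \geq \prod_j \Exp{1 - X_j E_{ji}}.
\end{equation}
Next I would argue that $X_j$ and $E_{ji}$ are independent: the alternative expression $X_j = 1 - \prod_{q \in \mathcal{Q}_j} (1 - \prod_{(k,l) \in q} E_{kl})$ from the proof of Lemma \ref{th:poscorrelated} involves only directed paths terminating at $j$, and no such (loopless) path can use the outgoing edge $(j,i)$. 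Hence $\Exp{X_j E_{ji}} = \Exp{X_j}\,p_{ji}$, which gives
\begin{equation}
\Exp{X_i} \leq 1 - \prod_j \bigl(1 - p_{ji}\,\Exp{X_j}\bigr).
\end{equation}

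To finish, I would use a concavity argument on $u \mapsto \ln(1 - p_{ji}u)$ for $u \in [0,1]$. Since this function is concave and vanishes at $u=0$, it lies above its chord, giving $\ln(1 - p_{ji} u) \geq u \ln(1 - p_{ji}) = -u\,\HazMat_{ji}$. Applying this with $u = \Exp{X_j}$ and summing over $j$ yields
\begin{equation}
\prod_j \bigl(1 - p_{ji}\,\Exp{X_j}\bigr) = \exp\!\Bigl(\sum_j \ln(1 - p_{ji}\,\Exp{X_j})\Bigr) \geq \exp\!\Bigl(-\sum_j \HazMat_{ji}\,\Exp{X_j}\Bigr),
\end{equation}
and substituting back gives the claimed inequality.

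The main obstacle is not any single step but rather the careful combination: one must correctly chain (i) the FKG-type positive correlation (which goes in the direction of lower-bounding the product of one-minus-terms), (ii) the independence of $X_j$ from $E_{ji}$ (which requires the path-based description of $X_j$), and (iii) the right convexity/concavity inequality to pass from $1 - p_{ji}\Exp{X_j}$ to $\exp(-\HazMat_{ji}\Exp{X_j})$ without reversing the direction. Once those three ingredients are lined up correctly, the conclusion follows directly.
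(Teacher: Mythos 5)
Your proof is correct and follows essentially the same route as the paper's: FKG positive correlation to lower-bound the expected product, factorization of $\Exp{X_j E_{ji}}$, and the chord inequality $\ln(1-p_{ji}u)\geq u\ln(1-p_{ji})$ to reach the exponential form. The only difference is that you explicitly justify the independence of $X_j$ and $E_{ji}$ via the loopless-path representation, a step the paper uses implicitly; this is a welcome clarification rather than a deviation.
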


\begin{proof}
The positive correlation of $\{1 - X_j E_{ji}\}_{j\in \{1,\dots,N\}}$ implies that
\begin{equation}
\Exp{\prod_j (1 - X_jE_{ji})} \geq \prod_j \Exp{1 - X_j E_{ji}}
\end{equation}
which leads to
\begin{equation}
\begin{array}{ll}
\Exp{X_i} &\leq 1 - \prod_j \Exp{1 - X_j E_{ji}}\\
               &= 1 - \prod_j \left(1 - \Exp{X_j}\Exp{E_{ji}}\right)\\
               &= 1 - \exp \left(\sum_j \ln(1 - \Exp{X_j}\Exp{E_{ji}})\right)\\
               &\leq 1 - \exp \left(\sum_j \ln(1 - \Exp{E_{ji}})\Exp{X_j}\right)\\
               &= 1 - \exp \left(\sum_j \HazMat_{ji} \Exp{X_j}\right)\\
\end{array}
\end{equation}
since we have on the one hand, for any $x \in [0,1]$ and $a < 1$, $\ln(1 - ax) \geq \ln(1 - a) x$, and on the other hand $\Exp{E_{ji}} = 1 - \exp(\HazMat_{ji})$ by definition of $\HazMat$.
\end{proof}

Using Lemma \ref{th:mainLemma}, we are now ready to start the proof of Proposition \ref{th:mainResult}.

\begin{proof}[Proof of Proposition \ref{th:mainResult}]

In order to simplify notations, we define $Z_i=\big(\Exp{X_i})_i$ that we collect in the vector $Z=(Z_i)_{i \in [1...n]}$. Using lemma \ref{th:mainLemma} and convexity of exponential function, we have for any $u \in R^n$ such that $\forall i\in A, u_i = 0$ and $\forall i\notin A, u_i \geq 0$,
\begin{equation}
\begin{array}{ll}
u^\top Z \leq |u|_1 \bigg (1-\sum_{i=1}^{n-1} \frac {u_i}{|u|_1} \exp(-(\HazMat^\top Z)_i) \bigg)
         \leq |u|_1 \bigg (1- \exp \big(-\frac{Z^\top \HazMat u}{|u|_1} \big) \bigg)
\end{array}
\end{equation}
where $|u|_1 = \sum_i |u_i|$ is the $L_1$-norm of $u$.

Now taking $u=(1_{i\notin A} Z_i)_i$ and noting that $\forall i \in \{1, \dots, n\}, \forall j \in A, \HazMat(A)_{ij} = 0$, we have
\begin{equation}
\begin{array}{ll}
\frac{Z^\top Z-n_0}{|Z|_1-n_0} &\leq 1- \exp \bigg(-\frac{Z^\top \HazMat(A) Z}{|Z|_1-n_0} \bigg)
\leq 1- \exp \bigg(-\frac{\rho_c(A) (Z^\top Z-n_0)}{|Z|_1-n_0}-\frac{\rho_c(A) n_0}{|Z|_1-n_0} \bigg)
\end{array}
\end{equation}
where $\rho_c(A)=\rho(\frac{\HazMat(A)+\HazMat(A)^\top}{2})$. Defining $y=\frac{Z^\top Z-n_0}{|Z|_1-n_0}$ and $z=|Z|_1-n_0=\sigma(A)-n_0$, the aforementioned inequation rewrites 
\begin{eqnarray}
y \leq 1- \exp \bigg(-\rho_c(A) y -\frac{\rho_c(A) n_0}{z} \bigg)
\end{eqnarray}
But by Cauchy-Schwarz inequality applied to $u$, $(n-n_0) (Z^\top Z-n_0) \geq (|Z|_1-n_0)^2$, which means that $z \leq y(n-n_0)$. We now consider the equation 
\begin{eqnarray}\label{eqn:equationy}
x-1+\exp \bigg (-\rho_c(A) x-\frac {\rho_c(A) n_0}{x(n-n_0)}\bigg) =0
\end{eqnarray}
Because the function $f:x \rightarrow x-1+\exp \big (-\rho_c(A) x+\frac {\rho_c(A) n_0}{x(n-n_0)}\big)$ is continuous, verifies $f(1) > 0$ and $\lim_{x \rightarrow 0^+} f(x) = -1$, equation \ref{eqn:equationy} admits a solution $\gamma_1$ in $]0,1[$.

We then prove by contradiction that $z \leq \gamma_1(n-n_0)$. Let us assume $z > \gamma_1(n-n_0)$. Then $y \leq 1- \exp \big(-\rho_c(A) y -\frac{\rho_c(A) n_0}{\gamma_1(n-n_0)} \big)$. But the function $h:x \rightarrow x-1+\exp \big (-\rho_c(A) x+\frac {\rho_c(A) n_0}{\gamma_1(n-n_0)}\big)$ is convex and verifies $h(0) < 0$ and $h(\gamma_1)=0$. Therefore, for any $y > \gamma_1$, $0=f(\gamma_1) \leq \frac{\gamma_1}{y}f(y)+(1-\frac{\gamma_1}{y})f(0)$, and therefore $f(y) > 0$. Thus, $y \leq \gamma_1$. But $z \leq y (n-n_0) \leq \gamma_1(n-n_0)$ which yields the contradiction.
\end{proof}

\begin{proof}[Proof of Corollary \ref{th:simpleBounds}]
We distinguish between the cases $\rho_c(A) > 1$ and $\rho_c(A) \leq 1$.
\paragraph{Case $\rho_c(A) < 1$.}

Using \Eq{eqn:equationy} and the fact that $\exp(z) \geq 1+z$, we get $\gamma_1 \leq \rho_c(A) \gamma_1 + \frac {\rho_c(A) n_0}{\gamma_1(n-n_0)}$ which rewrites $\gamma_1 \leq \sqrt{\frac{\rho_c(A) n_0}{(1-\rho_c(A))(n-n_0)}}$ in the case $\rho_c < 1$. Therefore,
\begin{eqnarray}
\sigma(A) \leq n_0+\sqrt{\frac{\rho_c(A)}{1-\rho_c(A)}} \sqrt{n_0(n-n_0)}
\end{eqnarray}

\paragraph{Case $\rho_c(A) \geq 1$.}

Using \Eq{eqn:equationy}, we get $\gamma_1-1+\exp(-\frac {\rho_c(A) n_0}{\gamma_1(n-n_0)}) \geq 0$, which implies $\gamma_1 \ln(\frac{1}{1-\gamma_1}) \geq \frac{\rho_c(A) n_0}{n-n_0} \geq \frac{n_0}{n-n_0}$. By concavity of the logarithm, we therefore have $\gamma_1^2 \geq \frac{n_0(1-\gamma_1)}{n-n_0}$ which means that 
$\gamma_1(n-n_0) \geq \frac{n_0(\sqrt{4n/n_0 - 3}-1)}{2}$. By plugging this lower bound in \Eq{eqn:equationy}, we obtain 

\begin{eqnarray}
\sigma(A) \leq n_0+\bigg(1-\exp \big(-\rho_c(A)-\frac{2\rho_c(A)}{\sqrt{4n/n_0-3}-1}\big)\bigg) (n-n_0)
\end{eqnarray}
\end{proof}

\subsection*{Proofs of Proposition \ref{th:uniformResult} and Corollary \ref{th:uniformsimpleBounds}}

In this subsection, we develop the proofs for Proposition \ref{th:uniformResult}
and Corollary \ref{th:uniformsimpleBounds}
in the case when the set of initially infected node is drawn from a uniform distribution over $\mathcal{P}_{n_0}(\mathcal{V})$.

We start with an important lemma that will play the same role in the proof of Proposition \ref{th:uniformResult}
than Lemma \ref{th:mainLemma} in the proof of Proposition \ref{th:mainResult}.

\begin{lemma}\label{th:mainLemmaUniform}

Define $\rho_c =\rho(\frac{\HazMat+\HazMat^\top}{2})$. Assume $A$ is drawn from an uniform distribution over $\mathcal{P}_{n_0}(\mathcal{V})$. Then, for any $i \in \mathcal{V}$, the probability $\Exp{X_i}$  that node $i$ will be reached by the contagion satisfies the following implicit inequation: 

\begin{equation}
\Exp{X_i} \leq 1 - \frac{n-n_0}{n} \exp \bigg(- \frac{n}{n-n_0} \sum_j \HazMat_{ji} \Exp{X_i} \bigg)
\end{equation}
\end{lemma}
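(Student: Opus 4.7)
The plan is to combine the conditional bound of Lemma~\ref{th:mainLemma}, applied pointwise for each realization of $A$, with Jensen's inequality and a symmetry argument to pass from expectations conditioned on $\{i\notin A\}$ back to unconditional expectations $\mathbb{E}[X_j]$.

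First I would decompose $\mathbb{E}[X_i]$ according to whether $i\in A$ or $i\notin A$. Since $A$ is uniform on $\mathcal{P}_{n_0}(\mathcal{V})$, we have $\mathbb{P}(i\in A)=n_0/n$ and $\mathbb{P}(i\notin A)=(n-n_0)/n$, and $X_i=1$ when $i\in A$. Conditioning on the event $\{i\notin A\}$ and on the realization of $A$, Lemma~\ref{th:mainLemma} gives the pointwise bound $\mathbb{E}[X_i\mid A]\leq 1-\exp(-\sum_j \mathcal{H}_{ji}\mathbb{E}[X_j\mid A])$. Taking expectation over $A$ conditional on $\{i\notin A\}$ and assembling yields
\begin{equation}
\mathbb{E}[X_i]\;\leq\;1-\frac{n-n_0}{n}\,\mathbb{E}\!\left[\exp\!\left(-\sum_j \mathcal{H}_{ji}\,\mathbb{E}[X_j\mid A]\right)\,\bigg|\,i\notin A\right].
\end{equation}

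Next I would apply Jensen's inequality: since $x\mapsto e^{-x}$ is convex, the inner expectation is at least $\exp(-\mathbb{E}[\sum_j \mathcal{H}_{ji}\mathbb{E}[X_j\mid A]\mid i\notin A])$, which, by linearity and the tower property, equals $\exp(-\sum_j \mathcal{H}_{ji}\mathbb{E}[X_j\mid i\notin A])$. Flipping the direction of the inequality through the minus sign gives
\begin{equation}
\mathbb{E}[X_i]\;\leq\;1-\frac{n-n_0}{n}\exp\!\left(-\sum_j \mathcal{H}_{ji}\,\mathbb{E}[X_j\mid i\notin A]\right).
\end{equation}

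The remaining step, which I expect to be the crux, is to replace the conditional expectations $\mathbb{E}[X_j\mid i\notin A]$ by the unconditional quantities $\mathbb{E}[X_j]$ that appear in the statement. This is handled by the trivial lower bound $\mathbb{E}[X_j\mid i\in A]\geq 0$: decomposing $\mathbb{E}[X_j]=\tfrac{n_0}{n}\mathbb{E}[X_j\mid i\in A]+\tfrac{n-n_0}{n}\mathbb{E}[X_j\mid i\notin A]$ immediately yields $\mathbb{E}[X_j\mid i\notin A]\leq \tfrac{n}{n-n_0}\mathbb{E}[X_j]$. Since $\mathcal{H}_{ji}\geq 0$ and $\exp(-\cdot)$ is decreasing, substituting this into the previous display delivers the claimed inequality. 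The main subtlety is recognizing that the presence of the factor $\tfrac{n}{n-n_0}$ inside the exponent in the lemma is exactly what absorbs the loss from this conditional-to-unconditional comparison; no sharper relation between the two is needed.
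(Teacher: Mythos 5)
Your proposal is correct and follows essentially the same route as the paper's own proof: the decomposition over $\{i\in A\}$ versus $\{i\notin A\}$, the conditional application of Lemma~\ref{th:mainLemma}, Jensen's inequality for the conditional expectation, and the final replacement of $\Exp{X_j\mid i\notin A}$ by $\frac{n}{n-n_0}\Exp{X_j}$ via non-negativity are exactly the steps the authors use. You also correctly identified that the factor $\frac{n}{n-n_0}$ in the exponent is precisely what absorbs the conditional-to-unconditional comparison (and, implicitly, that the $\Exp{X_i}$ inside the sum in the lemma's statement should read $\Exp{X_j}$, which is what both proofs actually establish).
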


\begin{proof}
\begin{equation}
\begin{array}{ll}
\Exp{X_i} &=\Exp{1_{\{i \in A\}}}+\Exp{1_{\{i \notin A\}}}\Exp{\Exp{X_i|A}|i \notin A} \\
            &\leq \frac{n_0}{n} + \frac{n-n_0}{n} \bigg(1 - \Exp{\exp \left(-\sum_j \HazMat_{ji} \Exp{X_j|A} \right)|i \notin A} \bigg)\\
            &\leq \frac{n_0}{n} + \frac{n-n_0}{n} \bigg(1 - \exp \left(-\Exp{\sum_j \HazMat_{ji} \Exp{X_j|A} |i \notin A}\right) \bigg)\\
            &= 1 - \frac{n-n_0}{n} \exp \left(-\sum_j \HazMat_{ji} \Exp{X_j|i \notin A} \right)\\
            &\leq 1 - \frac{n-n_0}{n} \exp \left(-\frac{n}{n-n_0} \sum_j \HazMat_{ji} \Exp{X_j} \right)
\end{array}
\end{equation}
where the first inequality is Lemma \ref{th:mainLemma} and the second one is Jensen inequality for conditional expectations.
\end{proof}
\begin{proof}[Proof of Proposition \ref{th:uniformResult}]
We define $Z_i=\big(\Exp{X_i})_i$ that we collect in the vector $Z=(Z_i)_{i \in [1...n]}$. Then, using Lemma \ref{th:mainLemmaUniform}, and convexity of exponential function, we have:
\begin{equation}
\begin{array}{ll}
\frac{Z^\top Z}{|Z|_1} &\leq \bigg (1- \frac{n-n_0}{n} \sum_{i=1}^n \frac {Z_i}{|Z|_1} \exp \big(-\frac{n}{n-n_0} (\HazMat^\top Z)_i\big) \bigg)\\
                       &\leq \bigg (1- \frac{n-n_0}{n} \exp \big(-\frac{n}{n-n_0} \frac{Z^\top \HazMat Z}{|Z|_1} \big) \bigg)
\end{array}
\end{equation}
Now, defining $y=\frac{Z^\top Z}{|Z|_1}$, we have by Cauchy-Schwarz inequality ${|Z|_1} \leq ny$ where $ y \leq 1- \frac{n-n_0}{n} \exp \big(-\frac{n}{n-n_0} \rho_c y \big)$. Because function $f:x \rightarrow x-1 + \frac{n-n_0}{n} \exp \big(-\frac{n}{n-n_0} \rho_c y \big)$ is continuous and convex over $]0,1[$, $f(0) < 0$ and $f(1) > 0$, there exists a solution $\gamma \in ]0,1[$ of the equation $f(x)=0$. By the same arguments than in proof of Proposition \ref{th:mainResult}
, we have that, for any $z \in [0,1]$, $f(z) \leq 0 \Rightarrow z \leq \gamma$. This proves the uniqueness of $\gamma$ as well as the fact that $y \leq \gamma$. Now, defining $\gamma_2 = \frac{n_0}{n}+\frac{n-n_0}{n}\gamma$, we have on the one hand \begin{equation}
\sigma_{\mbox{uniform}} \leq n_0 + \gamma_2 (n-n_0)
\end{equation}
and on the other hand
\begin{equation}
\gamma_2 - 1 + \exp \left(-\rho_c \gamma_2 - \frac{\rho_c n_0}{n-n_0} \right) = 0
\end{equation}
which proves the proposition.
\end{proof}
\begin{proof}[Proof of Corollary \ref{th:uniformsimpleBounds}]

In the case $\rho_c < 1$, using Proposition \ref{th:uniformResult}
and the fact that $\exp(z) \geq 1+z$, we get $\gamma_2 \leq \rho_c \gamma_2 + \frac {\rho_c n_0}{n-n_0}$ which rewrites $\gamma_2 \leq \frac{\rho_c n_0}{(1-\rho_c)(n-n_0)}$ in the case $\rho_c < 1$. Therefore,
\begin{eqnarray}
\sigma_{\mbox{uniform}} \leq n_0 \left(1+\frac{\rho_c}{1-\rho_c}\right)=\frac{n_0}{1-\rho_c}
\end{eqnarray}
The second claim is straightforward from Proposition \ref{th:uniformResult}
, using the fact that $\gamma_2 \leq 1$.
\end{proof}

\subsection*{Proofs of Lemma \ref{th:lemmaMassoulie}, Lemma \ref{th:BondPerco}, Proposition \ref{th:sizecomponent} and Corollary \ref{th:limsup}}
\begin{proof}[Proof of Lemma \ref{th:lemmaMassoulie}]
Because matrices $\frac{\HazMat(A) + \HazMat(A)^\top}{2}$ and $\frac{\beta}{\delta} \mathcal{A}$ are symmetric and verify $0 \leq \frac{\HazMat(A) + \HazMat(A)^\top}{2} \leq \frac{\beta}{\delta} \mathcal{A} = \mathcal{H}$ where $\leq$ stands for the coefficient-wise inequality, we have $\rho(\frac{\HazMat(A) + \HazMat(A)^\top}{2}) \leq \frac{\beta}{\delta} \rho(\mathcal{A})$ as a direct consequence of the Perron-Frobenius theorem (see e.g \cite{meyer2000matrix}). We now introduce the function $$f: \rho \rightarrow n_0+\sqrt{\frac{\rho}{1-\rho}} \sqrt{n_0(n-n_0)} - \frac{\sqrt{n n_0}}{1- \rho}$$ We have $f(0)<0$ and $f'(\rho)=\sqrt{n_0(n-n_0)} \frac{\rho}{(1-\rho)^{3/2}}-\sqrt{n_0 n} \frac{1}{(1-\rho)^{2}}<0$. Therefore, $f(\rho)<0$ for any $\rho \in [0,1]$, which proves the Lemma.
\end{proof}

\begin{proof}[Proof of Lemma \ref{th:BondPerco}]
First, note that, for bond percolation, the random variables $1_{\{(i,j) \in \mathcal{E}'\}}$ are independent Bernoulli variables $F_{(i,j)} \sim \mathcal{B}(p_{ij})$. We therefore have, similarly than in the proof of Lemma \ref{th:equivalence}
\begin{equation}
\label{eqn:F}
X_i = 1 - \prod_j (1 - X_j F_{(i,j)})
\end{equation}
where $X_i$ is $1$ if node $i$ belongs to the connected component containing the influencer node $v$, and is $0$ otherwise.
We then show that, because $\mathcal{P}$ is symmetric, for any infection process $DTIC(\mathcal{P})$, we can also define independent variables $F'_{(i,j)} \sim \mathcal{B}(p_{ij})$ such that, 
\begin{equation}
\label{eqn:Fprime}
X_i = 1 - \prod_j (1 - X_j F'_{(i,j)})
\end{equation}
Indeed, the event that node $i$ makes an attempt to infect node $j$ will never occur in the same epidemic than  the event that node $j$ makes an attempt to infect node $i$. Therefore, drawing two variables $E_{ij}$ and $E_{ji}$ at the beginning of each epidemic and letting the dynamic decide which of the two results will be used, or drawing only one variable $F'_{(i,j)}\sim \mathcal{B}(p_{ij})$ and using it for each epidemic to decide wether the infection can spread along the edge $(i,j)$ or not is strictly equivalent, given that $E_{ij}$ and $E_{ji}$ are independent and have the same distribution. From equations \ref{eqn:F} and \ref{eqn:Fprime}, we see that, for any $i \in \mathcal{V}$, the probability that a node $i$ is infected is the same for the two processes.
\end{proof}
\begin{proof}[Proof of Proposition \ref{th:sizecomponent}]
By proposition \ref{th:uniformResult}
applied to the case $n_0 = 1$ with the notation $\gamma_3=\frac{(n-1) \gamma_2 +1}{n}$, we get $\sigma_{\mbox{uniform}} \leq n \gamma_3$. We then use the fact that, when the influencer node is uniformly randomly drawn on $\mathcal{V}$, it belongs to the largest connected component and therefore creates an infection of $C_1(\mathcal{G}')$ nodes with probability $\frac{C_1(\mathcal{G}')}{n}$. Therefore, $\Exp{\frac{C_1(\mathcal{G}')}{n} C_1(\mathcal{G}')} \leq  \sigma_{\mbox{uniform}} \leq n \gamma_3$. But $\Exp{C_1(\mathcal{G}')^{2}} \geq \Exp{C_1(\mathcal{G}')}^{2}$ which yields $\Exp{C_1(\mathcal{G}')} \leq n \sqrt{\gamma_3}$.
Moreover, denoting as $C_A(\mathcal{G}')$ the size of the connected component containing the influencer node, we have 
$\sigma_{\mbox{uniform}} = \Exp{C_A(\mathcal{G}')} = \sum_i {i \Prob{C_A(\mathcal{G}') = i}} \geq n \Prob{C_A(\mathcal{G}') = n}=n \Prob{\mathcal{G}' \mbox{ is connected}}$, and therefore $\Prob{\mathcal{G}' \mbox{ is connected}} \leq \gamma_3$.
\end{proof}

\begin{proof}[Proof of Corollary \ref{th:limsup}]
According to Eq.~\ref{eqn:limsupassumption}
, there exists $m \in \mathbb{N}$ and $\eta < 1$ such that for any $n \geq m$, $\rho_c (\mathcal{H}^n)\leq \eta$. Therefore, Corrolary \ref{th:simpleSize}
 implies $\Exp{C_1(\mathcal{G'}_n)} \leq \sqrt{\frac{n}{1-\eta}}$. But for any $\delta > 0$, $\Prob{C_1(\mathcal{G'}_n)>\delta n^{1/2 + \epsilon}} \leq \frac{\Exp{C_1(\mathcal{G'}_n)}} {\delta n^{1/2 + \epsilon}} = o(1)$ which proves the corollary.
\end{proof}

\end{document}